\newcommand{\refer}[1]{\hyperref[#1]{\ref*{#1}}}
\newcommand{\C}{\mathbb{C}}
\newcommand{\p}{\mathcal{P}}
\newcommand{\Z}{\mathbb{Z}}
\newcommand{\Q}{\mathbb{Q}}
\newcommand{\R}{\mathbb{R}}
\newcommand{\T}{\mathbb{T}}
\newcommand{\bra}{\left\langle}
\newcommand{\ket}{\right\rangle}
\newcommand{\Diff}{\mbox{Diff}}
\newcommand{\beq}{\begin{equation*}}
\newcommand{\eeq}{\end{equation*}}
\newtheorem*{thm*}{Theorem}
\newtheorem*{addendum*}{Addendum}
\newtheorem{lemma}{Lemma}[section]
\newtheorem{conjecture}{Conjecture}
\newtheorem*{example*}{Examples}
\newtheorem{rmk}{Remark}
\newenvironment{customthm}[1]
  {\innercustomthm}
  {\endinnercustomthm}  
\newenvironment{customlemma}[1]
  {\innercustomlemma}
  {\endinnercustomlemma}
\newenvironment{customcoro}[1]
  {\innercustomcoro}
  {\endinnercustomcoro}  
\theoremstyle{definition}
\title{\Large\bf Rigidity and characteristic classes of smooth bundles with
nonpositively curved fibers}
\author{Mauricio Bustamante\and 
F.Thomas Farrell\and 
Yi Jiang}
\newcommand{\Addresses}{{
  \bigskip
  \footnotesize
  \textsc{Mauricio Bustamante}, \textsc{Department of Mathematical Sciences, Binghamton University, Binghamton, NY}\par\nopagebreak
 \texttt{bustamante.math@gmail.com}
  \medskip
  
  \textsc{F.Thomas Farrell}, \textsc{Department of Mathematical Sciences
  and Yau Mathematical Sciences Center, 
  Tsinghua University, Beijing, China}\par\nopagebreak
  \texttt{farrell@math.tsinghua.edu.cn}
  \medskip
  
  \textsc{Yi Jiang}, \textsc{Yau Mathematical Sciences Center, Tsinghua
  University, Beijing, China}\par\nopagebreak
\texttt{jiangyi@amss.ac.cn}
}}
\date{}
\begin{document}
\maketitle

\begin{abstract}
We prove vanishing results for the generalized Miller-Morita-Mumford
classes of some smooth bundles whose fiber is a closed manifold that supports a nonpositively curved Riemannian metric. We also find, under some extra conditions, that the vertical
tangent bundle is topologically rigid.
\end{abstract}
\section{Introduction and statement of results}
In this paper we study the topology of smooth fiber bundles $M\to E\to B$
whose fiber $M$ is a closed manifold that supports a nonpositively curved
Riemannian metric. We obtain two types of results, namely, topological
rigidity for the associated vertical tangent bundle of $E\to B$ and 
vanishing of the tautological classes in positive degrees (also known
as generalized Miller-Morita-Mumford classes).

Rigidity results have appeared in the context of 
negatively curved bundles already, 
i.e. smooth bundles
whose (concrete) fibers can be given 
negatively curved Riemannian metrics varying
continuously from fiber to fiber. In fact, it is proven in
\cite{FarrellGogolev} that the associated vertical \textit{sphere}
bundle of a 
negatively curved bundle is
topologically trivial, provided
the base space $B$ is a simply connected
simplicial complex. Our results, stated below, concern
with the associated vertical \textit{tangent}
bundle of a bundle whose
(abstract) fiber is a nonpositively curved manifold.

With regard to the rational tautological classes,
we show that they vanish
in many instances for bundles with
nonpositively curved fibers, provided
the dimension of the fiber is greater than 2.
This contrasts heavily with the recent results of 
Galatius and Randal-Williams \cite{GRW},
which show that tautological classes \textit{do not} vanish for
some smooth bundles 
with highly connected fibers.

Our results also point in the direction of the following conjecture
by Farrell and Ontaneda:
\begin{conjecture}[Farrell-Ontaneda]
Every negatively curved fiber bundle over a paracompact,
Hausdorff space 
with the homotopy type of a simply
connected finite simplicial complex is topologically trivial.
\end{conjecture}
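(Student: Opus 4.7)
The plan is to reduce the conjecture to a statement about the classifying space of the topological homeomorphism group of the fiber. A negatively curved fiber bundle $M\to E\to B$ with smooth (or topological) structure is classified by a map $f\colon B\to B\Diff(M)$, and its topological triviality amounts to showing that the composition $B\to B\Diff(M)\to B\mathrm{Top}(M)$ is null-homotopic. Since $M$ admits a negatively curved metric, Cartan--Hadamard forces $M$ to be aspherical and $\pi_1(M)$ to be centerless (in fact torsion-free and Gromov hyperbolic), which is exactly the setting where the homotopy type of $B\mathrm{Top}(M)$ is best understood.

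The next step is to analyze the components of $B\mathrm{Top}(M)$. For closed aspherical manifolds with hyperbolic fundamental group, the Farrell--Jones conjecture (known in these cases) implies that the Borel conjecture holds, and together with the parametrized stable$/$unstable $h$-cobordism theorem this should yield that each component of $\mathrm{Top}(M)$ is weakly contractible in the relevant range, so that $B\mathrm{Top}(M)\simeq B\pi_0(\mathrm{Top}(M))$. By Mostow rigidity (for $\dim M\ge 3$) every self-homotopy equivalence of $M$ is homotopic to a unique isometry, so $\pi_0(\mathrm{Top}(M))\cong \mathrm{Out}(\pi_1 M)$, which is a \emph{finite} group.

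Consequently one would have $B\mathrm{Top}(M)\simeq BG$ for a finite discrete group $G=\mathrm{Out}(\pi_1 M)$. Since $B$ has the homotopy type of a simply connected CW complex, any map $B\to BG$ admits a lift to the contractible universal cover of $BG$, and is therefore null-homotopic. This would conclude that $E\to B$ is topologically trivial. The low-dimensional cases $\dim M=2$ (and $\dim M=1$ trivially) should be dispatched separately using the classical theory of surface bundles and the Nielsen--Thurston / Earle--Eells results on $\Diff(\Sigma_g)$.

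The main obstacle will be step two: proving that the components of $\mathrm{Top}(M)$ are actually contractible (or at least that all higher homotopy of $B\mathrm{Top}(M)$ vanishes). The Farrell--Jones conjecture provides a handle on the Whitehead spectra and on the block homeomorphism space $\widetilde{\mathrm{Top}}(M)$, but passing from block homeomorphisms to honest homeomorphisms requires controlling the surgery structure set and, crucially, Waldhausen's $A$-theory of $M$. Integrally this is where existing partial results only yield rational information, and the gap between rational vanishing of tautological classes (as in the main results of this paper) and true topological triviality of the bundle is precisely this integral control over $\pi_\ast \mathrm{Top}(M)$. Bridging that gap is, in my view, the hard core of the conjecture.
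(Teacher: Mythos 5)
This statement is the Farrell--Ontaneda conjecture, which the paper explicitly presents as \emph{open}; there is no proof of it in the paper, so there is nothing to compare your proposal against except the paper's partial results. Your proposal is a strategy rather than a proof, and the gap you acknowledge at the end is real and is essentially the whole problem: showing that the identity component of $\mathrm{Top}(M)$ is weakly contractible (so that $B\mathrm{Top}(M)\simeq B\mathrm{Out}(\pi_1M)$) is far beyond what the Farrell--Jones conjecture delivers. Topological rigidity controls the \emph{block} homeomorphism space $\widetilde{\mathrm{Top}}(M)$; the passage to $\mathrm{Top}(M)$ goes through pseudoisotopy/$A$-theory and is understood only in the concordance-stable range of degrees, and mostly only rationally. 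Also, Mostow rigidity applies only to locally symmetric fibers; for a general closed negatively curved $M$ the finiteness of $\mathrm{Out}(\pi_1M)$ comes from Gromov's theorem on hyperbolic groups (as the paper notes in Example a)), and even the bijectivity of $\pi_0\mathrm{Top}(M)\to\mathrm{Out}(\pi_1M)$ requires both the Borel conjecture and a ``homotopic implies isotopic'' statement that is itself open.

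There is, however, a more structural defect: your argument never uses the hypothesis that the bundle is a \emph{negatively curved bundle}, i.e.\ that the concrete fibers carry continuously varying negatively curved metrics; it uses only that the abstract fiber $M$ admits such a metric. If it worked, it would therefore prove that \emph{every} smooth $M$-bundle over a simply connected finite complex is topologically trivial. That stronger statement is false: the paper cites Farrell--Gogolev's Theorem 1.6, which produces fiber homotopically trivial smooth bundles with real hyperbolic fibers whose associated vertical sphere bundles are not topologically trivial. Since the vertical tangent microbundle is an invariant of the underlying topological $M$-bundle (this is exactly the microbundle argument of Lemma 2.2 of the paper), a topological trivialization of such a bundle would trivialize its vertical sphere bundle; hence these bundles are not topologically trivial, and $B\mathrm{Top}(M)$ cannot be weakly equivalent to the classifying space of a finite group. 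So your step two is not merely unproven --- it is false, and the reduction to $[B,B\mathrm{Top}(M)]$ discards the essential hypothesis. Any viable attack must exploit the fiberwise metrics themselves, for instance through the moduli space of negatively curved metrics or through fiberwise exponential maps and controlled topology \`a la Ferry--Weinberger, which is precisely how the paper obtains its partial results (Theorems A and B: rigidity of the vertical \emph{tangent} bundle, not of the $M$-bundle).
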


We now proceed to state the results of this paper.

A \textit{smooth fiber bundle} is a fiber bundle 
$M\to E\xrightarrow{q} B$
whose (\textit{abstract}) fiber $M$
is a smooth finite dimensional manifold and 
whose structure group is
the group $\mbox{Diff}(M)$ of smooth
self-diffeomorphisms of $M$. We denote the
(\textit{concrete}) fiber $q^{-1}(x)$ over $x\in B$ by $M_x$.

Throughout this paper we assume that $M$ is a closed smooth manifold.
We say that a smooth fiber bundle $M\to E\xrightarrow{q} B$ 
is \textit{fiber homotopically trivial} (resp. \textit{topologically trivial})
if there exists a continuous map
$\Phi:E\to M$ such that its restriction to each
fiber $\Phi|_{M_x}:M_x\to M$ is a homotopy equivalence (resp. homeomorphism). Note that in this case the map
$E\to B\times M$ given by $a\mapsto (q(a),\Phi(a))$ 
is a fiber-preserving homotopy equivalence (resp. homeomorphism).

A smooth fiber bundle
$M\to E\to B$ over $B$ is said to have
\textit{nonpositively curved fibers} if
the abstract fiber $M$ supports
a complete Riemannian metric whose sectional curvatures are all nonpositive. 

Given a smooth fiber bundle $M\to E\xrightarrow{q} B$ we define its 
\textit{associated vertical tangent bundle} $\mathfrak{t}E\xrightarrow{\pi} B$
to be the smooth fiber bundle of vectors tangent to the fibers
$M_x$ of $q:E\to B$, that is to say
\beq
\mathfrak{t}E=\bigcup_{x\in B}TM_x,
\eeq
where $TM_x$ is the tangent bundle of $M_x$. 
The projection map $\pi:\mathfrak{t}E\to B$
is nothing but the composite 
$\mathfrak{t}E\xrightarrow{r} E\xrightarrow{q} B$,
where the map $r:\mathfrak{t}E\to E$ sends
a vector in $\mathfrak{t}E$ to its ``foot'' in $E$. 
Thus $\pi:\mathfrak{t}E\to B$ is a smooth fiber
bundle with fiber $TM$. In fact it is an associated bundle to 
$q:E\to B$ in the sense of Steenrod \cite[p. 43]{steenrod}.
Notice that $r:\mathfrak{t}E\to E$ is a vector
bundle of rank equal to the dimension of $M$. 
Also, if $E$ and $B$ are 
smooth manifolds and $\pi$ is a smooth map, $\mathfrak{t}E$
can be identified with the the subbundle of the tangent bundle $TE$
of $E$, consisting of all the vectors $v\in TE$ in the kernel of
the differential of $q$.

Our first theorem is a rigidity result
for the associated vertical tangent bundle 
of a smooth bundle with nonpositively curved fibers.
\begin{customthm}{A}\label{main1}
Let $M^n\to E\to B$ be a smooth fiber bundle with
nonpositively curved $n$-dimensional ($n\neq 3$) closed fiber
over a finite simplicial complex $B$. 
Assume that the bundle is
fiber homotopically trivial.
Then there exists a continuous map
$\Psi: \mathfrak{t}E\to TM$ such that $\Psi|_{TM_x}:TM_x\to TM$ 
is a homeomorphism for each $x\in B$.
\end{customthm} 

We invite the reader to compare this result with 
\cite[Theorem 1.6]{FarrellGogolev}, where
they find fiber homotopically trivial smooth bundles with 
real hyperbolic fibers, whose associated
\textit{sphere} bundles are \textit{not} topologically trivial.

Using obstruction theory, one can show that 
a smooth fiber bundle $M\to E\to B$ is fiber
homotopically trivial if the fiber $M$ is aspherical, $\pi_1(M)$ is centerless and $B$ is a simply connected finite simplicial complex 
(see for example \cite[Proposition 1.4]{FarrellGogolev}). Hence
Theorem \ref{main1} (cf. Remark \ref{centerless} below) implies the following:

\begin{customcoro}{A.1}\label{simplyconnected}
If $M^n\to E\to B$ is a smooth bundle ($n\neq 3$), with $M$ a closed
negatively curved manifold and $B$ a simply connected finite simplicial complex, then there exists a continuous map
$\Psi: \mathfrak{t}E\to TM$ such that $\Psi|_{TM_x}:TM_x\to TM$ 
is a homeomorphism for each $x\in B$.
\end{customcoro}

We now recall the definition of the tautological classes (also known as
generalized Miller-Morita-Mumford classes)
of an oriented smooth fiber bundle. Let
$M^n\to E\xrightarrow{q} B$ be an \textit{oriented}
 smooth fiber bundle with closed $n$-dimensional
fiber $M$, i.e. $M$ is oriented and the structure group of the bundle is the
group of orientation preserving self-diffeomorphisms of $M$.
Let $\beta: E\to BSO(n)$ be the classifying map for the
vector bundle $\R^n\to \mathfrak{t}E\xrightarrow{r} E$. 
For every cohomology class $c\in H^i(BSO(n);\Z)$, we define
a \textit{tautological class} $\tau_c(E)$ for the smooth bundle 
$M\to E\xrightarrow{q} B$ by the formula
\beq
\tau_c(E):=q_!\, \beta^*(c)\in H^{i-n}(B;\Z),
\eeq
where
$q_!:H^*(E;\Z)\to H^{*-n}(B;\Z)$ is the Gysin map 
(``integration along the fiber''), arising from the Serre
spectral sequence for the fiber bundle $E\to B$. 
(See \cite[pag. 148-150]{moritabook}.)
We  say that $\tau_c(E)\in H^{i-n}(B;\Z)$ is a class of
\textit{positive degree}  (resp. \textit{degree zero}) if 
$i>n$ (resp. $i=n$).

Our next result 
is another instance of rigidity for the
associated vertical tangent bundle. It
has the consequence that the rational tautological
characteristic classes of a smooth bundle with nonpositively
curved fiber depend only on the characteristic classes of the
abstract fiber.

Recall that a \textit{topological $\R^n$-bundle} is a 
fiber bundle whose fiber is
homeomorphic to $\R^n$. Its structure group is
denoted by $\mbox{TOP}(n)$ and consists of all self-homeomorphisms of $\R^n$. 
Additionally we can assume that every topological
$\R^n$-bundle has a zero section. This is possible because
the map $p:\mbox{TOP}(n)\to \R^n$, defined by $p(f)=f(0)$ is a fibration whose
base space is contractible. Then $\mbox{TOP}(n)$ is homotopy equivalent to
the fiber of $p$, namely
the group of self-homeomorphisms that fix the origin. The latter
corresponds to the structure group for topological $\R^n$-bundles with a zero-section.

Let $\rho: B\times M\to M$ be the 
projection map onto the second factor.
\begin{customthm}{B}\label{tangential}
Let $M^n\to E\to B$ be a smooth fiber bundle over a 
finite simplicial complex $B$, with fiber a closed
nonpositively curved $n$-dimensional manifold ($n\neq 3$).
Assume that the bundle is
fiber homotopically trivial and let $\Phi:E\to B\times M$ be a fiber
homotopy equivalence.
Then $\mathfrak{t}E$ and the pullback of
the tangent bundle of $M$ along
the composition $E\xrightarrow{\Phi}B\times M\xrightarrow{\rho} M$
are isomorphic as topological $\R^n$-bundles.
\end{customthm}
\begin{rmk}
Although Theorems \ref{main1} and \ref{tangential} appear to be
essentially the same, they are not. In fact in Theorem
\ref{main1} we obtain a homeomorphism which does
not necessarily cover the given fiber
homotopy equivalence between the smooth bundles. Thus Theorem
\ref{tangential} is not a consequence of Theorem
\ref{main1}, and viceversa.
\end{rmk}
As a consequence of Theorem \ref{tangential} we have:
\begin{customcoro}{B.1}\label{main2}
Let $M^n\to E\to B$ be a smooth fiber bundle as in the statement of Theorem
\ref{tangential} (without excluding $n=3$). 
Assume that the bundle is oriented. Then
for all $i>n$ and for any rational cohomology class $c\in H^i(BSO(n);\Q)$,
\begin{equation}\label{eq1}
\tau_{c}(E) = 0.
\end{equation}
\end{customcoro}
\begin{rmk}
If one assumes that the fiber in Corollary \ref{main2} is
negatively curved, it is
possible to drop the condition of fiber 
homotopy triviality and still
get the vanishing of the rational tautological 
classes in positive degrees 
(cf. Theorem \ref{app1} and Example a) below).
\end{rmk}
\begin{rmk}
Tautological classes of degree zero of
(not necessarily fiber homotopically trivial) oriented 
smooth $M^n$-bundles
$q:E\to B$ over a path-connected space $B$ satisfy
the following equation:
\begin{equation}\label{eq2}
\bra\tau_{c}(E),[1_B]\ket=\bra\alpha^*(c),[M]\ket,\ \ \ \text{if}\ \  c\in H^n(BSO(n);\Q),
\end{equation}
where 
$\alpha:M\to BSO(n)$ denotes the classifying map
 for the tangent bundle
of $M$ and $[1_B]\in H_0(B;\Q)$ denotes the image of 
the generator of $H_0(B;\Z)$ and $[M]\in H_n(M;\Q)$ the fundamental
class of $M$. 

To prove this equation, identify the abstract fiber $M$ with some
concrete fiber $M_{\ast}$, $\ast\in B$. Now, if
$\iota:M\hookrightarrow E$ denotes the inclusion map we
obtain a morphism of smooth $M$-bundles
\beq
\xymatrix{
M\ar[r]^{\iota}\ar[d]_{h} & E\ar[d]^{q}\\
\ast\ar[r]^{\subset}&B
}
\eeq
Thus equation (\ref{eq2}) follows straightforwardly from
the naturality of the tautological classes.

\end{rmk}
One can actually realize many non-vanishing classes (in degree zero)
appearing in the
previous remark by bundles with nonpositively curved fibers, indeed
with negatively curved fibers. Here is an example: let
$c\in H^{4k}(BSO(4k);\Q)$
be a monomial of degree $4k$ in the rational Pontrjagin classes
$p_1,\ldots, p_{k}$ such that the corresponding 
Pontrjagin number of the 
complex projective space $\C P^{2k}$ 
is non-zero \cite[p.194]{milnor}.
By Ontaneda's work on smooth Riemannian
hyperbolization \cite{pedro}, the 
cobordism class of $\C P^{2k}$ can
be represented by a closed negatively
curved smooth $4k$-manifold $M$. Hence
the corresponding tautological class $\tau_c(M)$ of the smooth bundle $M\to M\to\ast$ over a point does not vanish, in fact it equals its corresponding Pontrjagin number by the equation (\ref{eq2}). 

A modification of the proofs of 
Theorem \ref{tangential} and Corollary \ref{main2} yields the
following theorem and its corollary. Recall that
a smooth bundle $M\to E\to B$ is a nonpositively curved bundle if each
concrete fiber $M_x$ can be endowed with a nonpositively curved 
metric $g_x$ in a way that the metrics $g_x$ vary continuously from
fiber to fiber (c.f. \cite{gafa}).

\textbf{Caveat}. Not every smooth bundle with nonpositively curved fibers
is a nonpositively curved bundle. In fact, for every closed negatively
curved manifold $M$ of dimension greater than $9$, it is possible
to construct smooth $M$-bundles over a circle which cannot be given
a fiberwise 
nonpositively curved metric \cite{FO15}.
\begin{customthm}{C}\label{add}
Let $E\to B$ and $E'\to B$ be fiber homotopy equivalent 
smooth $M^n$-bundles over a finite simplicial complex $B$,
where $M^n$ is a closed smooth manifold of 
dimension $n\neq 3$. Assume that 
$M\to E\to B$ is a nonpositively curved bundle.
If $\Phi:E\to E'$ is a fiber homotopy equivalence then
$\Phi^*(\mathfrak{t}E')$ and $\mathfrak{t}E$ are isomorphic as
topological
$\R^n$-bundles over $E$. 
\end{customthm}
\begin{customcoro}{C.1}\label{sameclasses}
Under the hypothesis of the Theorem \ref{add}
(including n=3), the bundles 
$E\to B$ and $E'\to B$ have the same rational tautological 
characteristic classes, provided they are oriented and the fiber
homotopy equivalence $\Phi$ preserves the orientation.
\end{customcoro}

This invariance of the characteristic classes under fiber 
homotopy equivalence can
be used to obtain vanishing results for characteristic classes
in concrete cases where the fiber is a torus or 
a closed hyperbolic manifold, without the assumption of
fiber homotopy triviality.

\begin{customthm}{D}\label{torus}
Let $\T^n\to E\to B$ be a smooth fiber bundle over a compact smooth manifold $B$ with fiber an $n$-dimensional
torus $\T^n$. Then the rational Pontrjagin
classes of its associated vector bundle $r:\mathfrak{t}E\to E$ over
$E$, vanish.
\end{customthm}
Recall that every rational cohomology class $ c\in H^*(BSO(n);\Q)$
can be expressed as a polynomial in the Pontrjagin classes and
the Euler class, such that the Euler class in each monomoial
has degree at most 1. Also, equation (\ref{eq2}) and
the fact that the Euler characteristic of a torus is 
identically zero, imply that
the tautological class $\tau_e(E)$ 
corresponding to the Euler class 
$e\in H^n(BSO(n);\Q)$ of an oriented
torus bundle $\T^n\to E\to B$ vanishes. Hence
we obtain the following corollary:
\begin{customcoro}{D.1}\label{vanishingtorus}
Let  $\T^n\to E\to B$ be an oriented torus bundle
over a compact smooth
manifold $B$. Then $\tau_c(E)=0$ 
for all $c\in H^*(BSO(n);\Q)$.
\end{customcoro}

\begin{customthm}{E}\label{hyperbolic}
Let $M\to E\to B$ be an oriented smooth fiber 
bundle over a finite simplicial complex $B$, with fiber an
$n$-dimensional ($n\geq 3$) (real, complex or quaternionic) hyperbolic manifold.
Then $\tau_c(E)=0$ 
for all $c\in H^i(BSO(n);\Q)$ with $i>n$.
\end{customthm}
Theorem \ref{hyperbolic} also follows from the 
next more general result, which is proved by different 
methods in the Appendix.
\begin{customthm}{F}\label{app1}
Let $M\to E\xrightarrow{q} B$ be an oriented smooth $M$-bundle 
over a finite simplicial complex $B$. Assume that $M^n$ ($n\geq 3$)
is a nonpositively curved closed manifold such that
$\mbox{Out}(\pi_1M)$ is finite and $\pi_1M$ is centerless.
Then $\tau_c(E)=0$ for all $c\in H^i(BSO(n);\Q)$ with $i>n$.
\end{customthm}
\begin{example*}
\ 
\begin{itemize}
\item[a)] Theorem \ref{app1} holds when $M$ is a closed negatively curved 
Riemannian manifold of dimension at least 3 
(\cite[Theorem 5.4.A]{gromov}, cf. Remark \ref{centerless}).
\item [b)] Theorem \ref{app1} also holds if $M$ is a nonpositively curved locally symmetric space
of non-compact type, such that
 it has no finite sheeted cover $\widehat{M}$
such that $\widehat{M}$ is a metric product $A\times B$ of Riemannian
manifolds where $A$ is 2-dimensional. Notice that in this case
$\mbox{Out}(\pi_1M)$ is finite by Mostow's strong rigidity theorem
\cite[Theorem 24.1]{mostow},
and the fundamental group is centerless as shown for example in
\cite[p.210]{eberlein83}.
\item[c)] The conclusion of Theorem \ref{app1} fails if one allows
the outer automorphisms group of the fundamental group of the fiber to be
infinite, yet the fundamental group itself can be centerless. For an example, consider
the product $E\times E$ of an oriented surface
bundle $N\to E\xrightarrow{q} B$ over a closed surface B  with itself. Denote by $\tau_{\cal{L}}(E\times E)$ 
the tautological class corresponding to the $\cal{L}$-genus
of $\mathfrak{t}(E\times E)\to E\times E$, i.e. 
\beq
\tau_{\cal{L}}(E\times E)=
(q\times q)_!\left(\frac{7}{45}p_2(\mathfrak{t}(E\times E))+
	\frac{1}{45}p_1^2(\mathfrak{t}(E\times E))\right)\in 
	H^4(B\times B;\Q).
\eeq
It is not difficult to conclude that
\beq
\left<\tau_{\cal{L}}(E\times E),[B\times B]\right>=
\mbox{sign}\, (E\times E)
=(\mbox{sign}\, E)^2.
\eeq
And it is known (see \cite{atiyahsign} or \cite[p.155-160]{moritabook}) that 
there exist surface bundles over surfaces for which $\mbox{sign}\, E\neq 0$.

Note that the example $b)$ does not apply to the
bundle $E\times E\to B\times B$ since $N\times N$
obviously has a 2-dimensional metric factor.
On the other hand, there are closed 
irreducible nonpositively curved $4$-dimensional
Riemannian manifolds $M$ (i.e. no finite sheeted cover of $M$ is
a nontrivial metric product) 
which are locally isometric to $N\times N$
(i.e. the universal cover of $M$ is isometric to the product of 
hyperbolic spaces 
$\mathbb{H}\times\mathbb{H}$) and hence 
satisfy the conditions of example
b). Therefore the fundamental group $\pi_1M$ is centerless and 
$\mbox{Out}(\pi_1M)$ is finite. Consequently
Theorem \ref{app1} applies
to oriented smooth bundles with fiber $M$. The existence of such manifolds $N$ is a consequence of 
\cite[Theorem C]{johnson} and \cite[\S 6]{shimizu}. 
\end{itemize}
\end{example*}
Under the additional assumption that
the fiber $M$ of a smooth bundle
satisfies the Strong Borel Conjecture, we can show 
the invariance of the tautological classes 
under fiber homotopy equivalence of $M$-bundles.
We say that a closed manifold $M$ 
\textit{satisfies the Strong Borel
Conjecture (SBC)} if, for 
all $k\geq 0$,
every self-homotopy equivalence of pairs 
$(M\times D^k,M\times S^{k-1})\to (M\times D^k,M\times S^{k-1})$ which
is a homeomorphism when restricted to the boundary $M\times S^{k-1}$ is
homotopic (relative to the boundary) to a homeomorphism, where
$D^k$ is a $k$-dimensional closed disc. For example, every closed
nonpositively curved Riemannian manifold of dimension $\geq 5$ 
satisfies SBC, since it is topologically rigid  \cite{farrelljones2}. (See
\cite{farrelljones1} for more results about topological rigidity).
\begin{customthm}{G}\label{app3}
Let $\Phi:E\to E'$ be a fiber homotopy equivalence between 
$M$-bundles $p:E\to B$ and $q:E'\to B$
over a finite simplicial complex $B$. Assume that $M$ is a
closed smooth manifold which satisfies SBC. Then 
$\Phi^*(p_i(\mathfrak{t}E'))=p_i(\mathfrak{t}E)$, where 
$p_i(\mathfrak{t}E)\in H^{4i}(E;\Q)$ denotes the $i$-th rational
Pontrjagin class of $\mathfrak{t}E\to E$.
\end{customthm}

\begin{customcoro}{G.1}\label{app4}
Let  $E\to B$ and $E'\to B$ be oriented smooth 
$M$-bundles over a finite simplicial complex $B$. 
Assume that $M$ satisfies SBC. If there
is an orientation preserving fiber
homotopy equivalence $E\to E'$, then
\beq
\tau_c(E)=\tau_c(E')
\eeq
for all $c\in H^*(BSO(n);\Q)$.
\end{customcoro}
\begin{rmk}
With the exception of Theorem \ref{torus} and
Corollary \ref{vanishingtorus}, all the other
results stated above can be 
proven under the more general 
assumption that the base space $B$ is a paracompact, Hausdorff 
space which is homotopy equivalent to a finite
simplicial complex $\mathcal{B}$ 
(for example, $B$ could be a compact topological manifold \cite{triangulation}). In that case the 
proofs follow from rather direct arguments involving 
pull-backs of bundles along some homotopy
equivalence $\mathcal{B}\to B$ and a homotopy 
inverse.
We do not state them in that generality in order not to obscure the main ideas.
\end{rmk}
This article is organized as follows: In Section \ref{topotri} we 
prove Theorem \ref{main1}. In Section \ref{vanishing} we prove \ref{tangential} and Corollary \ref{main2}. Theorem \ref{add} and 
Corollary \ref{sameclasses} are proven in Section \ref{nonpositively}.
In Section \ref{torusbundle}
we prove Theorem \ref{torus}, Corollary
\ref{vanishingtorus} and Theorem \ref{hyperbolic}.
Since the proofs of Theorems \ref{app1} and
\ref{app3} and Corollary \ref{app4} follow a different strategy than that used to prove
the other results, we do 
them in the
Appendix at the end of this article. In the same appendix we 
prove a lemma about the invariance of the Euler class under 
fiber homotopy equivalence 
(Lemma \ref{app2}, see Section \ref{nonpositively}). 

\subsection*{Acknowledgements}
The first author is grateful to Tom Farrell and Pedro Ontaneda
for the
financial support from their NSF grant
during the year when this project
was carried out. He also acknowledges the hospitality
of the Yau Mathematical Sciences Center of Tsinghua University
in Beijing. The second author was partially supported by NSF
grant DMS-1206622. We are grateful to Oscar Randal-Williams for
his comments on an earlier
version of this paper, which led to a correct
proof of Lemma \ref{app2}.
We acknowledge the referee, who provided us with
very valuable comments and suggestions that made the proofs of
some of our theorems much more clear, particularly those in
Sections \ref{topotri} and \ref{nonpositively}.
\section{Topological type of the vertical tangent bundle}\label{topotri}
In this section we prove Theorem \refer{main1}. We handle the cases
$n\geq 4$, $n=2$ and $n=1$ separately. 
We will make use of the following:
\begin{lemma}{\cite[Corollary 2.7]{FW91}}\label{coro2.7}
Let $n\geq 4$ and $k$ be given. Then there is a $\delta>0$ such that if
$X^k$ is a $k$-dimensional complex, $p:E\to X^k$ is a vector bundle with fiber
$\R^n$, $p':E'\to X'$ is a topological $\R^n$-bundle, and $\hat{f}:E\to E'$
is a bundle map that covers $f:X\to X'$ and such that the restriction of $\hat{f}$ to each fiber is a
$\delta$-map, then $f^*(E')$ is isomorphic to $E$ as a topological 
$\R^n$-bundle.
\end{lemma}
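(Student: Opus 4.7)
The plan is to reconstruct the argument of \cite{FW91} using techniques from controlled topology. First, I would factor $\hat{f}$ through the pullback: by the universal property, there is a canonical bundle map $\tilde{f}: E \to f^*(E')$ over $X$ whose composition with the natural projection $f^*(E') \to E'$ equals $\hat{f}$, and whose fiberwise restrictions are (via the canonical identifications) the same $\delta$-maps between $\R^n$-fibers as those of $\hat{f}$. This reduces the task to showing that $E$ and $f^*(E')$ are isomorphic as topological $\R^n$-bundles over the common base $X^k$, given a fiberwise $\delta$-map between them.

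Next, I would proceed by induction over the skeleta of $X^k$. Over each cell $\Delta^j$, both bundles trivialize to $\Delta^j \times \R^n$, and the fiberwise $\delta$-map $\tilde{f}$ becomes a parameterized family of $\delta$-self-maps of $\R^n$. For $n \geq 4$, the Edwards-Kirby local contractibility of $\mbox{TOP}(n)$, together with Kirby-Siebenmann handle straightening, allows such a family to be replaced by a parameterized family of homeomorphisms uniformly close to the original, provided the control parameter is sufficiently small. Gluing these fiberwise homeomorphisms across skeleta by an $\alpha$-approximation argument yields a genuine fiberwise homeomorphism $E \to f^*(E')$, which is precisely an isomorphism of topological $\R^n$-bundles.

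The main obstacle is the controlled bookkeeping: each inductive straightening step on passing from the $j$-skeleton to the $(j{+}1)$-skeleton degrades the control by a fixed factor, so $\delta$ must be chosen small enough at the outset to survive all $k$ extension stages. This uniform choice $\delta = \delta(n,k) > 0$ is the core Ferry-Weinberger controlled-topology input. The hypothesis $n \geq 4$ is essential, since the stability and local contractibility results for $\mbox{TOP}(n)$ on which the straightening depends fail in the low-dimensional range.
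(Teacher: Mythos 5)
The paper offers no proof of this lemma to compare against: it is quoted verbatim, with attribution, from \cite{FW91} and used as a black box throughout. So the only question is whether your reconstruction of the Ferry--Weinberger argument is sound, and its central step is not.

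The gap is the claim that, for small $\delta$, a parameterized family of fiberwise $\delta$-maps of $\R^n$ can ``be replaced by a parameterized family of homeomorphisms uniformly close to the original'' via Edwards--Kirby local contractibility of $\mbox{TOP}(n)$ and handle straightening. A $\delta$-map is only required to have point-inverses of diameter less than $\delta$ in the source; it need not be surjective, proper, degree one, or anywhere near a homeomorphism. Any open embedding of $\R^n$ onto a bounded open subset is a $\delta$-map for every $\delta>0$, and a $\delta$-map may translate points arbitrarily far, so ``uniformly close to the original'' is unattainable and local contractibility of $\mbox{TOP}(n)$ --- a statement about homeomorphisms near the identity --- gives no purchase on such maps. (Kirby--Siebenmann handle straightening is likewise a device for straightening handles inside topological manifolds of dimension $\geq 5$, not for converting non-injective maps into homeomorphisms.) What the $\delta$-hypothesis actually buys --- and the way this paper uses it, rescaling along the linear structure of the vector-bundle source to shrink $\delta$ at will --- is control near the zero section; the local conclusion lives at the level of germs/microbundles, and promoting it to a genuine $\R^n$-bundle isomorphism requires Kister--Mazur together with the controlled-topology machinery of \cite{FW91} applied to the total spaces $D^j\times\R^n$ over the cells of $X^k$, which is where $n\geq 4$ (ambient dimension $\geq 5$ over cells of positive dimension) really enters. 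Your closing remark that local contractibility of $\mbox{TOP}(n)$ fails in low dimensions is also incorrect: it holds for all $n$ by \v{C}ernavski\u{\i}--Edwards--Kirby. The outer shell of your argument --- pull back to a common base, induct over skeleta, track the loss of control --- does match the shape of the Ferry--Weinberger proof, but the local straightening step as you state it is false, so the argument does not close.
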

A map $g:Y\to Z$ between two metric spaces $Y$ and $Z$
is a $\delta$-\textit{map} if $\mbox{diam}(g^{-1}(z))<\delta$, for each $z\in Z$.

Here and in the next sections we will need the following
construction: Let $\Gamma$ be a group. Recall that if $X$ is a 
right $\Gamma$-space and $Y$ is a left $\Gamma$-space, then
we can define $X\times_{\Gamma}Y$ to be the quotient space $X\times Y/\sim$, where the equivalence relation on $X\times Y$ is given by
$(x,y)\sim (x',y')$ if there exists $g\in\Gamma$
such that $x'=xg^{-1}$ and $y'=gy$.

Let $\Gamma\to E\Gamma\xrightarrow{\pi} B\Gamma$ be the
universal principal $\Gamma$-bundle over $B\Gamma$ and $F$ be a left $\Gamma$-space. The 
\textit{associated} fiber bundle over $B\Gamma$ with fiber $F$ is denoted
by 
\beq
F\to E\Gamma\times_{\Gamma}F\to B\Gamma,
\eeq
where the bundle projection sends a class
$[e,x]\in E\Gamma\times_{\Gamma}F$ to $\pi(e)$.

\begin{proof}[\textbf{Proof of Theorem \refer{main1}}]\ \\
\textbf{Case $n\geq 4$:} Our proof in this case is essentially
a parametrized version of the proof of a theorem of Ferry and Weinberger
\cite[Theorem 2]{FW91}. Let $\Phi:E\to B\times M$ be a homotopy trivialization of the
smooth bundle $M\to E\xrightarrow{q} B$. 
Identify $M$ with the concrete fiber $M_{\ast}=q^{-1}(\ast)$, 
$\ast\in B$, and endow it with a nonpositively curved Riemannian metric.

Fix a universal covering $\rho_M:\widetilde{M}\to M$ and 
denote by $\Gamma$ its
group of deck transformations. Let $\widetilde{E}$ be the pullback
of the covering 
$B\times\widetilde{M}\xrightarrow{id_B\times\rho_M}B\times M$ 
along $\Phi:E\to B\times M$ and 
$\widetilde{\Phi}:\widetilde{E}\to B\times\widetilde{M}$ the map
that covers $\Phi$. 

Consider now the 
following diagram:
\begin{equation}\label{diagram1}
\xymatrixcolsep{4pc}\xymatrix{
        B\times TM\ar[r]^-{id_B\times Exp}\ar[dr]_{id_B\times\tau_M}    
     & B\times\widetilde{M}\times_{\Gamma}\widetilde{M}\ar[d]_{id_B\times u}     
     \ar[r]^-{\widetilde{\Upsilon}\times_{\Gamma} id}      
     &\widetilde{E}\times_{\Gamma}\widetilde{M}\ar[dl]^-{(q\circ\rho_E)
     \times_{\Gamma}\rho_M}\\
     &B\times M&
}
\end{equation}
where the maps are defined as follows:
\begin{itemize}
\item $\tau_M:TM\to M$ is the tangent bundle projection.
\item To define
$Exp:TM\to \widetilde{M}\times_{\Gamma}\widetilde{M}$,
first consider the map $T\widetilde{M}\to\widetilde{M}\times\widetilde{M}$ 
given by $v\mapsto (\gamma_v(1),\gamma_v(0))$, where $\gamma_v$ is the unique
geodesic in $\widetilde{M}$ such that $\gamma_v(0)=\tau_{\widetilde{M}}(v)$
and $\dot{\gamma}_v(0)=v$. This map is $\Gamma$-equivariant respect to
the action of $\Gamma$ on $T\widetilde{M}$ induced by the $\Gamma$-action
on $\widetilde{M}$. The map
$Exp:TM\to\widetilde{M}\times_{\Gamma}\widetilde{M}$ is the induced map on
the orbit space.
\item $u:\widetilde{M}\times_{\Gamma}\widetilde{M}\to M$
is the projection onto the second
factor mod $\Gamma$.
\item The map $\widetilde{\Upsilon}\times_{\Gamma}id$ is defined as 
follows:
Let $\Upsilon:B\times M\to E$ be a (fiber preserving) 
homotopy inverse of $\Phi$ (cf. \cite{dold2}). Thus 
$\Upsilon^*\widetilde{E}$ is isomorphic to $B\times\widetilde{M}$.
Consequently we obtain an
isomorphism of $\Gamma$-coverings which in turn gives rise to a lifting
$\widetilde{\Upsilon}:B\times\widetilde{M}\to \widetilde{E}$ of
$\Upsilon$. The map 
$\widetilde{\Upsilon}\times_{\Gamma}id$ corresponds to the map induced by $\widetilde{\Upsilon}\times id$ on the $\Gamma$-orbit space.
\item $\rho_E:\widetilde{E}\to E$ is the natural projection.
\end{itemize}

Notice that the left diagonal map is a vector bundle and the right diagonal
map is a topological $\R^n$-bundle. 
One easily checks that every triangle in the  
\hyperref[diagram1]{diagram \ref*{diagram1} above} commutes. 

Hence
$(\widetilde{\Upsilon}\times_{\Gamma} id)\circ (id_B\times Exp):B\times TM\to \widetilde{E}\times_{\Gamma}
\widetilde{M}$ is a fiber preserving map. Moreover, for each $x\in B$,
the restriction 
$\widetilde{\Upsilon}|_{\{x\}\times\widetilde{M}}
:\{x\}\times\widetilde{M}\to\widetilde{M_x}$ 
is a $\delta_x$-map for some
$\delta_x\geq 0$. Thus, since $B$ is compact, 
the restriction of $\widetilde{\Upsilon}$ to each fiber
is a $\delta$-map for any 
$\delta\geq\max\{\delta_x\}$.
Since we are choosing a
nonpositively curved metric on $M$,
the exponential map $Exp$ is a weakly expanding map
by the Cartan-Hadamard theorem, i.e.
for each $p\in M$, and $v_1,v_2\in T_pM$
\beq
d(Exp(v_1),Exp(v_2))\geq d(v_1,v_2).
\eeq
Hence (using the linear structure on the fibers
of $TM$ to obtain a suitable $\delta$ if necessary)
the map 
$(\widetilde{\Upsilon}\times_{\Gamma} id)\circ (id_B\times Exp):
B\times TM\to\widetilde{E}\times_{\Gamma}
\widetilde{M}$
satisfies the conditions of Lemma \ref{coro2.7}, and this implies that
there exists an $\R^n$-bundle isomorphism
\beq
h:B\times TM\to\widetilde{E}\times_{\Gamma}\widetilde{M}.
\eeq

We now consider the $\R^n$-bundle $\tau E\xrightarrow{t}E$,
which is the pullback of the natural projection
$\rho_E\times_{\Gamma}(q\circ\rho_E):
\widetilde{E}\times_{\Gamma}\widetilde{E}\to
E	\times B$
along the map $(id_E,q):E\to E\times B$

Another $n$-vector bundle $V$	 over $E$ can be obtained as follows: Denote by 
$pr_{\widetilde{M}}:B\times\widetilde{M}\to\widetilde{M}$
the projection onto the second factor and let
$s:E\to\widetilde{E}\times_{\Gamma}\widetilde{M}$ be
the section induced by
$(id_{\widetilde{E}},pr_{\widetilde{M}}\circ\widetilde{\Phi})
:\widetilde{E}\to\widetilde{E}\times\widetilde{M}$ 
on orbit spaces . The tangent bundle
projection $T\widetilde{M}\to\widetilde{M}$ induces a 
vector bundle 
$\widetilde{E}\times_{\Gamma}T\widetilde{M}\to\widetilde{E}\times_{\Gamma}\widetilde{M}$.
Let $V\to E$ be the pullback of this vector bundle along 
$s$. By composing the
map $V\to\widetilde{E}\times_{\Gamma}T\widetilde{M}$
that covers $s:E\to\widetilde{E}\times_{\Gamma}\widetilde{M}$
with the natural map
$\widetilde{E}\times_{\Gamma}T\widetilde{M}\to\widetilde{E}\times_{\Gamma}\widetilde{M}$
induced by the exponential map (defined with respect to the complete
nonpositively curved metric on $\widetilde{M}$), we 
obtain a topological $\R^n$-bundle homeomorphism 
denoted by
$\omega:V\to\widetilde{E}\times_{\Gamma}\widetilde{M}$.

In addition, note that the map
\beq
\widetilde{E}\times_{\Gamma}\widetilde{M}\to
\widetilde{E}\times_{\Gamma}\widetilde{E},
\eeq 
that sends a class $[e,y]\in\widetilde{E}\times_{\Gamma}\widetilde{M}$
to $[e,\widetilde{\Upsilon}(q(\rho_E(e)),y)]$,
covers the map $(id_E,q):E\to E\times B$.
Hence there is a map 
$\lambda:\widetilde{E}\times_{\Gamma}\widetilde{M}\to\tau E$ that
makes the following diagram commutative:
\begin{equation}\label{diagram2}
\xymatrixcolsep{4pc}\xymatrix{
        V\ar[r]^{\omega\ \ \ }\ar[dr]    
     & \widetilde{E}\times_{\Gamma}\widetilde{M}\ar[d]_{\rho_E}     
     \ar[r]^{\ \ \ \lambda\ }      
     &\tau E\ar[dl]^{\ \ t}\\
     &E&
}
\end{equation}

Arguing as a above, we find that the composition 
$\lambda\circ\omega$ in
\hyperref[diagram2]{diagram \ref*{diagram2}} is a bundle map whose restriction
to each fiber is a $\delta$-map.
We invoke Lemma \ref{coro2.7} again to conclude that there is a fiber-preserving
homeomorphism $h':V\to\tau E$.

Note that the restriction of the bundle map 
$\tau E\xrightarrow{(h'\circ\omega^{-1}\circ h)^{-1}} B\times TM\xrightarrow{pr} TM$ to each fiber over $x\in B$ is 
a homeomorphism. Thus the proof of Theorem \ref{main1} in the
case $n\geq 4$ will be completed as soon as
we prove the following lemma:
\begin{lemma}\label{isoasbundles}
$\mathfrak{t}E$ and $\tau E$ are isomorphic topological 
$\R^n$-bundles over $E$.
\end{lemma}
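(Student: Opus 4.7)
I would mirror the strategy that yielded $V\cong\tau E$: construct a bundle map $\Theta:\mathfrak{t}E\to\tau E$ covering the identity on $E$ whose fiberwise restrictions are $\delta$-maps, and then apply Lemma \ref{coro2.7} to conclude that $\mathfrak{t}E$ and $\tau E$ are isomorphic as topological $\R^n$-bundles over $E$.

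To define $\Theta$ I would work on the universal covers, where the complete nonpositively curved metric on $\widetilde{M}$ makes the Cartan-Hadamard exponential available. Given $(e,v)\in\mathfrak{t}E$ with $v\in T_eM_{q(e)}$, choose a lift $\tilde{e}\in\widetilde{E}$ of $e$; using the lifted fiber-homotopy equivalence $\widetilde{\Phi}:\widetilde{E}\to B\times\widetilde{M}$ and its homotopy inverse $\widetilde{\Upsilon}$, transport the canonical lift $\tilde{v}\in T_{\tilde{e}}\widetilde{M_{q(e)}}$ to a tangent vector on $\widetilde{M}$, apply $Exp$ to land in $\widetilde{M}$, and then pull the result back into $\widetilde{M_{q(e)}}$ via $\widetilde{\Upsilon}(q(e),\cdot)$. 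The resulting class in $\widetilde{E}_e\times_\Gamma\widetilde{M_{q(e)}}$, which is the fiber of $\tau E$ over $e$, is $\Theta(e,v)$. This parallels the composition $(\widetilde{\Upsilon}\times_\Gamma id)\circ(id_B\times Exp)$ appearing in diagram (\ref{diagram1}), but now globalized over $E$ rather than over $B\times M$. The fiberwise $\delta$-map property should follow from the same two ingredients used earlier: $Exp$ is weakly expanding by Cartan-Hadamard, and $\widetilde{\Upsilon}|_{\{x\}\times\widetilde{M}}$ is a uniform $\delta$-map in $x\in B$ by compactness of $B$; rescaling along the fibers of the vector bundle $\mathfrak{t}E$ supplies any $\delta>0$ required by Lemma \ref{coro2.7}.

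The main obstacle will be making the transport step rigorous: since $\widetilde{\Phi}$ and $\widetilde{\Upsilon}$ are only continuous, one cannot literally differentiate them to push tangent vectors forward. I would address this either by smoothly approximating $\widetilde{\Upsilon}$ fiberwise (which is permissible since the target conclusion only asks for a topological $\R^n$-bundle isomorphism, so the approximated map differs from the original by a fiberwise-small amount preserved by Lemma \ref{coro2.7}), or by routing the construction through the already-built vector bundle $V$ together with the maps $\omega$ and $\lambda$, inside which the needed fiberwise transport has effectively been packaged in a smooth/continuous form.
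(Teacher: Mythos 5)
There is a genuine gap, and you have in fact put your finger on it yourself: your map $\Theta$ requires pushing the lift $\tilde{v}\in T_{\tilde{e}}\widetilde{M_{q(e)}}$ forward to a tangent vector of $\widetilde{M}$ via $\widetilde{\Phi}$, which is only continuous, and neither of your proposed repairs works. A smooth fiberwise approximation of $\widetilde{\Phi}$ is still merely a homotopy equivalence on each fiber; its differential can degenerate arbitrarily (it need not be injective on any tangent space), so the composite with $Exp$ and $\widetilde{\Upsilon}$ has no reason to be a fiberwise $\delta$-map, and Lemma \ref{coro2.7} cannot be applied. Routing through $V$ does not help either: $\omega$ and $\lambda$ give $V\cong\tau E$, but no map from $\mathfrak{t}E$ to $V$ is constructed anywhere in the argument, and building one runs into exactly the same differentiation problem. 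Nor can you fall back on the fiberwise exponential of metrics $g_x$ on the concrete fibers $M_x$ as a global $\delta$-map: the hypotheses give a nonpositively curved metric only on the abstract fiber $M$, not continuously varying nonpositively curved metrics on the $M_x$ (the paper's Caveat shows the latter can genuinely fail to exist), and for an arbitrary fiberwise metric the exponential map of the universal cover need not be injective or expanding, so the $\delta$-map hypothesis of Lemma \ref{coro2.7} is simply not available.

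The paper's proof avoids Lemma \ref{coro2.7} entirely for this step, and this is the idea you are missing: the comparison between $\mathfrak{t}E$ and $\tau E$ is purely local near the zero section. One chooses an arbitrary continuously varying family of fiberwise Riemannian metrics $g_x$, uses the lifted exponential maps $exp^{\,g_x}:T\widetilde{M}_x\to\widetilde{M}_x$ to produce a $\Gamma$-equivariant map $\mathfrak{t}\widetilde{E}\to\widetilde{E}\times\widetilde{E}$, and hence a map $\zeta:\mathfrak{t}E\to\tau E$ covering $id_E$. By compactness of $B$ and of the fibers, the injectivity radii are uniformly bounded below by some $r>0$, so $\zeta$ restricted to the $r$-neighborhood of the zero section is an embedding onto a neighborhood of the zero section of $\tau E$. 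This exhibits $\mathfrak{t}E$ and $\tau E$ as isomorphic microbundles, and the Kister--Mazur theorem upgrades this to an isomorphism of topological $\R^n$-bundles. No curvature condition, no expansion estimate, and no transport through $\Phi$ are needed for this particular lemma.
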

\begin{proof}
We prove this lemma by first showing that $\mathfrak{t}E$ and $\tau E$
are isomorphic as microbundles. (We refer the reader to
\cite{micromilnor} for the basic
material about microbundles.) To see this, first choose 
a continuously varying family of Riemannian
metrics $g_x$, $x\in B$, on each fiber $M_x$. 
Let
$exp^{\, g_x}:T\widetilde{M}_x\to\widetilde{M_x}$ 
be the exponential map associated to the Riemannian metric on
$\widetilde{M}_x$ induced by $g_x$. 
Define $\mathfrak{t}\widetilde{E}\to\widetilde{E}\times\widetilde{E}$
by sending $v\in \mathfrak{t}\widetilde{E}$ to 
$[\widetilde{r}(v),exp^{g_x}(v)]$, where 
$\widetilde{r}:\mathfrak{t}\widetilde{E}\to\widetilde{E}$ is the
bundle projection and $x=q(\rho_E(\widetilde{r}(v)))$.
Since the 
metrics vary continuously from fiber to fiber, we obtain a
$\Gamma$-equivariant map 
$\mathfrak{t}\widetilde{E}\to\widetilde{E}\times\widetilde{E}$,
and moding out by $\Gamma$ we have
the following commutative diagram:
\beq
\xymatrix{
\mathfrak{t}E\ar[r]\ar[d] & \widetilde{E}\times_{\Gamma}\widetilde{E}\ar[d]\\
 E\ar[r]^-{(1,q)} & E\times B
}
\eeq
where the vertical maps are the obvious bundle projections.
This diagram, in turn,  gives rise to a map
\beq
\zeta:\mathfrak{t}E\to\tau E=(1,q)^*(\widetilde{E}\times_{\Gamma}\widetilde{E})
\eeq

As $g_x$ depends
continuously on $x$, the function on $M_x$
that assigns to each point 
in $M_x$ its injectivity
radius, varies continuously over $x\in B$.
Since $B$ and $M_x$ are both
compact, let $r>0$ be the minimum among all these injectivity radii.
Denote by $N_r(M_x)$ an $r$-neighborhood of 
$M_x$ in $TM_x$. Then the restriction of
the map $\zeta$ to 
the neighborhood $N_r(E)=\bigcup_{x\in B} N_r(M_x)$ of 
the zero section $E$ in $\mathfrak{t}E$, 
is an embedding into a neighborhood of the zero section in $\tau E$.
This proves that $\mathfrak{t}E$ and $\tau E$ are isomorphic
microbundles. Hence, by Kister-Mazur's theorem
\cite[Theorem 2]{kister}, $\mathfrak{t}E$ and $\tau E$ must be 
isomorphic as topological $\R^n$-bundles. 
This completes the proof of Lemma 
\ref{isoasbundles} and Theorem \ref{main1} when $n\geq 4$.
\end{proof}

We now prove Theorem \ref{main1} in the remaining cases.
Let $B\mbox{Diff}(M)$ be the classifying space for the group
$\mbox{Diff}(M)$ of 
self-diffeomorphisms of $M$  and let $BG(M)$ 
be the classifying space for 
the associative $H$-space $G(M)$ of self-homotopy equivalences of $M$
(see \cite{dold}). Define $\sigma:B\mbox{Diff}(M)\to BG(M)$ to be the map
functorially induced by the inclusion (which
is an $H$-space morphism) $\iota:\mbox{Diff}(M)\hookrightarrow G(M)$.
It follows from \cite{dold} that two smooth bundles
$f_1,f_2:B\to B\mbox{Diff}(M)$ are fiber homotopy equivalent if and only if
$\sigma\circ f_1$ is homotopic to $\sigma\circ f_2$. Thus, if $\sigma$
is a weak homotopy equivalence, then
every fiber homotopically trivial
smooth $M$-bundle over $B$ is also smoothly trivial (see 
\cite[Proposition 4.22]{hatcher}). This implies 
that the associated vertical
tangent bundle will be topologically (in fact smoothly) trivial.

Our strategy to prove Theorem \ref{main1} in lower dimensions consists of 
proving that the map $\sigma$ is a weak homotopy equivalence. For this
it suffices to show that $\iota:\mbox{Diff}(M)\to G(M)$ 
is a weak homotopy equivalence. 

Recall also that the space $G(X)$ of self-homotopy equivalences of any
aspherical complex $X$ has the homotopy type of
$\mbox{Out}(\pi_1X)\times K(\mbox{Center}(\pi_1X),1)$ 
(see \cite{G(X)}). It will also be
useful to keep in mind that $\mbox{Diff}(M)=\pi_0\mbox{Diff}(M)\times \mbox{Diff}^0(M)$, where 
$\mbox{Diff}^0(M)$ denotes the connected component of the identity.

With all this preparation, we can prove the theorem in dimensions $2$ and $1$.

\textbf{Case $n=2$:}
The closed surfaces that support a nonpositively curved metric are the
orientable surfaces of genus $\geq 1$ and non-orientable surfaces of genus
$g\geq 2$. 

It follows from classical theorems of Dehn, Nielsen, Baer 
(see \cite[Theorem 8.1]{farb})
and Mangler \cite{mangler}, that for all nonpositively curved 
closed surfaces $M$,
the inclusion $\mbox{Top}(M)\hookrightarrow G(M)$ induces a bijection on connected
components. It is also known that the inclusion
$\Diff(M)\hookrightarrow\mbox{Top}(M)$ induces an isomorphism
$\pi_0(\mbox{Diff}(M))\simeq\pi_0(\mbox{Top}(M))$ 
(see \cite[Theorem 1.13]{farb} and \cite{EE69}).
Therefore we have an isomorphism $\iota_*:\pi_0(\mbox{Diff}(M))\to\pi_0(G(M))$ induced by the inclusion 
$\Diff(M)\hookrightarrow G(M)$.
Earle and Eells \cite{EE69} showed that $\pi_i(\mbox{Diff}^0(M))$ is trivial for all
$i\geq 2$ and for all nonpositvely curved surfaces $M$. Likewise
 $\pi_i(G^0(M))=0$ if $i\geq 2$, where $G^0(M)$ is the identity
 component of $G(M)$.
 
Hence it remains to show that $\iota$ induces
an isomorphism on fundamental groups. This has to be divided into several 
cases:
\begin{itemize}
\item ($M$ has genus $\geq 2$): Earle and Eells also prove in \cite{EE69} 
that $\mbox{Diff}^0(M)$ is contractible. Since
the center of $\pi_1(M)$ is trivial, 
the components of $G(M)$ are also contractible.
Therefore $\iota:\mbox{Diff}(M)\to G(M)$ induces an isomorphism on fundamental groups.
\item ($M$ is a 2-torus): We regard $M$ as a Lie group
whose identity element we denote by $e\in M$. Let $p:G(M)\to M $ be evaluation
at $e$, i.e. $p(f)=f(e)$. Also, for $g\in M$, denote by $L_g:M\to M$
the map $L_g(g')=gg'$. Note that the identity map $id:M\to M$ factors 
through
\beq
M\xrightarrow{L} \mbox{Diff}(M)\xrightarrow{\iota} G(M)\xrightarrow{p} M,
\eeq
where $L(g)=L_g$. This induces an isomorphism
\beq
\pi_1(M,e)\xrightarrow{L_*}\pi_1(\mbox{Diff}(M),id)\xrightarrow{\iota_*}
\pi_1(G(M),id)\xrightarrow{p_*}\pi_1(M,e).
\eeq
Earle and Eells \cite{EE69} show that $\pi_1(\mbox{Diff}(M),id)$ is a free abelian group
of rank $2$. 
Then observe that each group in the sequence above is free abelian
of rank $2$, and the composition is the identity. This forces 
$\iota_*$ to be an isomorphism.
\item ($M$ is the Klein bottle):
There is
a smooth action $L:S^1\to \mbox{Diff}(M)$ of the circle $S^1$ on $M$.
In order to describe the smooth homomorphism $L$ we think
of $S^1=[0,2]/0\sim 2$ as obtained by identifying the end points of the
interval $[0,2]$.
and we identify $M$ with the quotient
space $S^1\times [0,1]/\sim$, where $(z,0)\sim (\bar{z},1)$ (here $S^1$
should be thought of as the unit circle in the complex plane, and $\bar{z}$ is
complex conjugation). The action $L$ is then
given by $L(s)(z,t)=(z,t+s)$, for $s\in [0,2]/0\sim 2$, where ``$+$'' here is understood to 
be addition mod $\Z$.

Let $\ast=(1,0)\in M$ be a base point. Then we have a sequence of continuous
maps
\beq
S^1\xrightarrow{L} \mbox{Diff}(M)\xrightarrow{\iota} G(M)\xrightarrow{p} M,
\eeq
where $p$ is evaluation at $\ast\in M$. Now recall that 
$\mbox{Center}(\pi_1(M))$ is an infinity cyclic group generated by the
class in $\pi_1(M,\ast)$ represented by the orbit
of $\ast\in M$ under this circle action. Then it is easy to see that the
induced map
\beq
\pi_1(S^1)
\xrightarrow{L_*}\pi_1(\mbox{Diff}(M),id)\xrightarrow{\iota_*}
\pi_1(G(M),id)\xrightarrow{p_*}\mbox{Center}(\pi_1(M,\ast))
\eeq
is an isomorphism.\\
It is shown also in \cite{EE69} that $\pi_1(\mbox{Diff}(M),id)$ is infinite cyclic.
Likewise $\pi_1(G(M),id)$ is
infinite cyclic since $\mbox{Center}(\pi_1(M,\ast))$ is infinite
cyclic. Therefore $\iota_*$ must be an isomorphism.
\end{itemize}

This concludes the proof of the theorem when $n=2$.

\textbf{Case $n=1$:} This case follows exactly as with the torus. We only
need to recall that $\mbox{Diff}(S^1)$ has only two components each of which has the
homotopy type of a circle.
\end{proof}
\begin{rmk}
When $M$ is a closed hyperbolic 3-manifold,
Gabai shows in \cite{gabai} that the identity component of
$\mbox{Diff}(M)$
is contractible. It is also shown
in \cite{Gab97} and \cite{GMT} that the
connected components of $\mbox{Diff}(M)$ are in canonical 
bijection with the connected components of the isometry group
$\mbox{Isom}(M)$ of $M$.
This, combined with Mostow rigidity theorem, shows that
$\iota: \mbox{Diff}(M)\hookrightarrow G(M)$ is
a homotopy equivalence.

It would be interesting to know whether Theorem \ref{main1} holds 
for all smooth bundles with nonpositively
curved $3$-dimensional fiber.
\end{rmk}
\section{Vanishing of tautological classes}\label{vanishing}
In this section we prove Theorem
\ref{tangential} and Corollary \ref{main2}. 
Let $\rho:B\times M\to M$ be the
projection onto the second factor.

We now prove Theorem \ref{tangential}. 
\begin{proof}[\textbf{Proof of Theorem \ref{tangential}}]\ 

\textbf{Case $n\geq 4$}: We make use of Ferry and 
Weinberger's result (Lemma \ref{coro2.7}) again. 
Put a nonpositively curved metric on $M$ and
consider the following diagram 
(with the same notation as in Section \ref{topotri})
\beq
\xymatrix{
& B\times TM\ar[r]\ar[d]_{id_B\times\tau_M} & \tau E\ar[d]^{t}\\
& B\times M\ar[r]^-{\Upsilon} &  E}
\eeq
where the top horizontal map is defined by
\beq
(x,v)\mapsto\left(\Upsilon(x,\tau_M(v)),
\left[\widetilde{\Upsilon}(x,\gamma_v(0)),
\widetilde{\Upsilon}(x,\gamma_v(1))\right]\right).
\eeq
Here $\gamma_v:[0,1]\to\widetilde{M}$ is the unique geodesic
with $\gamma_v(0)=\tau_M(v)$ and $\dot{\gamma}_v(0)=v$, 
and the square brackets denote an
equivalence class in $\widetilde{E}\times_{\Gamma}\widetilde{E}$.
This is a morphism of topological
$\R^n$-bundles and one easily checks (by compactness of $B$) that the 
horizontal top map is a $\delta$-map when restricted to each
fiber, for some $\delta>0$. The linear structure on the fibers
of $TM$ can be used to adjust this $\delta$ so that it is the same
as in Lemma \ref{coro2.7}.  Therefore
$\psi^*(\tau E)\simeq B\times TM$. Thus we have the following isomorphisms
of topological $\R^n$-bundles:
%

\begin{align*}
\mathfrak{t}E & \simeq\tau E\\
			   &\simeq\Phi^*(B\times TM)\\
			   &\simeq\Phi^*\rho^*(TM),
\end{align*}
where the first isomorphism comes from Lemma 
\ref{isoasbundles}. This completes the proof of Theorem \ref{tangential}
in the case $n\geq 4$.

\textbf{Case $n=1,2$}.
Since every fiber homotopically trivial smooth $M$-bundle is smoothly
trivial (by the proof of Theorem \ref{main1}), then there exists an
isomorphism $\Lambda:E\to B\times M$ of smooth $M$-bundles over $B$.
Since the inclusion $\iota:\mbox{Diff}(M)\hookrightarrow G(M)$ is a (weak)
homotopy equivalence, it follows that 
$\Phi\circ\Lambda^{-1}:B\times M\to B\times M$ is fiberwise homotopic to
an isomorphism between smooth $M$-bundles. This yields to an isomorphism
between vector bundles over $B\times M$, namely:
\beq
(\rho\circ\Phi\circ\Lambda^{-1})^*(TM)\simeq B\times TM,
\eeq
and hence an isomorphism between vector bundles over $E$:
\beq
(\rho\circ\Phi)^*(TM)\simeq\Lambda^*(B\times TM)\simeq\mathfrak{t}E.
\eeq
This completes the proof of Theorem \ref{tangential} in the case $n=1,2$.
\end{proof}
\begin{proof}[\textbf{Proof of Corollary \ref{main2}}]
Suppose first that $n\neq 3$. Denote
by $\mathfrak{F}:BSO(n)\to B\mbox{TOP}(n)$ the 
``forget both orientation and linear structure'' map.
Then Theorem \ref{tangential} can be rephrased by saying that 
\beq
\mathfrak{F}\circ\beta \sim
\mathfrak{F}\circ\alpha\circ (\rho\circ\Phi),
\eeq
where $\beta:E\to BSO(n)$ and $\alpha:M\to BSO(n)$ are the classifying
maps for the associated vertical tangent bundle of $q:E\to B$ and
the tangent bundle of $M$, respectively.
Note that the rational Pontrjagin classes can be viewed as 
characteristic classes for topological $\R^n$-bundles 
(see \cite{kirbysiebenmann}), i.e. each universal Pontrjagin
class $p_i\in H^{4i}(BSO(n);\Q)$ has a preimage under the
induced homomorphism
\beq
\mathfrak{F}^*:H^*(\mbox{BTOP}(n);\Q)\to H^*(BSO(n);\Q).
\eeq
This implies that 
\beq
\beta^*p_i=(\rho\circ\Phi)^*\alpha^*p_i,
\eeq
for all $i$.
On the other hand, observe that the Euler class is well defined
for oriented topological $\R^n$-bundles and natural with respect to
orientation-preserving bundle maps of topological $\R^n$-bundles, 
hence
\beq
\beta^*e=\pm(\rho\circ\Phi)^*\alpha^*e,
\eeq
where $e\in H^n(BSO(n);\Q)$ is the Euler class.

Since $H^*(BSO(n);\Q)$ is a polynomial 
ring with rational coefficients
generated by the Pontrjagin classes and the Euler class, then
the last two equations imply
\beq
\beta^*c=\pm(\rho\circ\Phi)^*\alpha^*c
\eeq
for any $c\in H^*(BSO(n);\Q)$. Hence every rational tautological
class $\tau_c(E)$ of positive degree must vanish.

Let now $n=3$. Fix a point $\ast\in S^1$.
If $M\to E\xrightarrow{q} B$ is a smooth bundle with
nonpositively curved fibers, then so is
$M\times S^1\to E\times S^1\xrightarrow{q'}B$,
where $q'(a,z)=q(a)$.  Note that we only
have to analyze one possible nonzero cohomology class, namely the
first Pontrjagin class $p_1\in H^4(BSO(n);\Q)$.  It is easy to see that 
$\sigma^*\mathfrak{t}(E\times S^1)\simeq\mathfrak{t}E\oplus\varepsilon$, where $\varepsilon$
is a trivial line bundle over $E$ and
$\sigma:E\hookrightarrow E\times S^1$ 
sends $a\in E$ to $(a,\ast)$. 
Since the dimension of $M\times S^1$ is $4$, we conclude that:
\begin{align*}
p_1(\mathfrak{t}E)&=p_1(\mathfrak{t}E\oplus\varepsilon)\\
				   &=p_1(\sigma^*\mathfrak{t}(E\times S^1))\\
				   &=\sigma^*\circ(\rho\times id_{S^1}\circ\Phi\times id_{S^1})^* p_1(M\times S^1)=0,
\end{align*}
where the last equation follows from the fact that 
$M\times S^1$ bounds a closed oriented $5$-dimensional
manifold-with-boundary.
\end{proof}
\begin{rmk}
The vanishing of the tautological classes for smooth bundles with
$3$-dimensional fibers (without curvature assumptions) is proved
by J. Ebert in \cite{ebert}.
\end{rmk}
\section{Tautological classes of nonpositively curved
bundles}\label{nonpositively}
In this section we prove Theorem \ref{add}. The strategy of proof
is the same as in Theorem \ref{tangential}. We construct
a space $\widehat{\mathfrak{t}}E$ which plays
the same role as $\widetilde{E}\times_{\Gamma}\widetilde{M}$ when
$E\to B$ is fiber homotopically trivial (see Section \ref{topotri}).
Assume that $M$ is just a closed smooth manifold 
(without curvature conditions).
For any
$\ast\in M$ define $\p_{\ast}(M)$ to be the set of homotopy classes 
(relative to the end points) of paths $\alpha:[0,1]\to M$ whose initial
point is $\ast$. For a path $\alpha$ and a neighborhood $U\subset M$ of 
$\alpha(1)$, let $(\alpha,U)$ denote the set of all paths 
$\beta:[0,1]\to M$ such that $\beta(0)=\ast$, $\beta(1)\in U$ and
there exists a path $\gamma:[0,1]\to U$ with the property that 
$\gamma(0)=\alpha(1)$, $\gamma(1)=\beta(1)$ such that the loop
$\alpha\ast\gamma\ast\beta^{-1}$ is nullhomotopic.

The sets $(\alpha,U)$ depend on the homotopy class of $\alpha$
and on the open set $U$. In fact they form a basis for the topology of 
$\p_{\ast}(M)$. With this topology $\p_{\ast}(M)$ is a simply connected
space and the map 
$\p_{\ast}(M)\to M$ sending a class $[\alpha]$ to the endpoint
$\alpha(1)$, is a covering map (see for example 
\cite[I.6]{greenberg}).

Let $\p(M)$ be the disjoint union 
$\displaystyle\bigsqcup_{\ast\in M}\p_{\ast}(M)$ topologized as follows:
For a path $\alpha:[0,1]\to M$, and neighborhoods $U$ and $V$ of 
$\alpha(0)$ and $\alpha(1)$ respectively, let 
$(\alpha, U,V)$ denote the set of all paths $\beta:[0,1]\to M$ such
that $\beta(0)\in U$, $\beta(1)\in V$ and there exist paths
$\gamma_U:[0,1]\to U$ and $\gamma_V:[0,1]\to V$ with the property that
$\gamma_U(0)=\alpha(0)$, $\gamma_U(1)=\beta(0)$, 
$\gamma_V(0)=\alpha(1)$, $\gamma_V(1)=\beta(1)$ and the loop
$\alpha\ast\gamma_V\ast\beta^{-1}\ast\gamma_U^{-1}$ is nullhomotopic.
It is straightforward that the sets $(\alpha,U,V)$ are well-defined 
and that they form a basis for the topology
of $\p(M)$. With this topology the onto map $r_0:\p(M)\to M$, sending
a class $[\alpha]$ to its initial point $\alpha(0)$ is continuous. In
fact we have the following lemma.
\begin{lemma}\label{p(M)}
If the universal cover of $M$ is homeomorphic to $\R^n$, then
\beq
\p(M)\xrightarrow{r_0}M
\eeq
is a topological $\R^n$-bundle over $M$.
\end{lemma}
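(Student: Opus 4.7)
The plan is to identify each fiber of $r_0$ with the universal cover of $M$ and construct explicit local trivializations over coordinate balls. First, the fiber $r_0^{-1}(\ast) = \p_\ast(M)$ is by definition the universal cover of $M$ based at $\ast$ (as noted just before the statement), and a quick comparison of basic open sets shows that its subspace topology from $\p(M)$ agrees with its native covering topology; by hypothesis each fiber is then homeomorphic to $\R^n$.

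For local triviality, given $p \in M$, choose a coordinate ball $U \ni p$; in particular $U$ is simply connected. For each $q \in U$, select a path $\gamma_{pq} \subset U$ from $p$ to $q$ (e.g., the straight-line segment in the coordinate chart), with $\gamma_{pp}$ constant. Define
\beq
\Phi_U: r_0^{-1}(U) \to U \times \p_p(M), \qquad \Phi_U([\alpha]) = \bigl(\alpha(0),\, [\gamma_{p,\alpha(0)} \ast \alpha]\bigr).
\eeq
Simple connectivity of $U$ makes $\Phi_U$ well-defined (independent of the choice of $\gamma_{pq}$) and bijective, with inverse $(q, [\beta]) \mapsto [\gamma_{pq}^{-1} \ast \beta]$, and it intertwines $r_0$ with projection to $U$.

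Continuity of $\Phi_U$ and $\Phi_U^{-1}$ is then checked by unwinding the explicit neighborhood bases: for a basic neighborhood $V_1 \times (\gamma_{pq} \ast \alpha, V_2)$ of $\Phi_U([\alpha])$ in $U \times \p_p(M)$, the basic set $(\alpha, V_1, V_2) \subset \p(M)$ maps into it, using that prepending $\gamma_{p,\beta(0)}$ to the auxiliary path $\gamma_V \subset V_1$ appearing in the basic-set definition yields a path in the simply connected $U$ homotopic to $\gamma_{pq}$. On overlaps $U \cap U'$ of two such charts, the transition is given fiberwise by prepending a homotopy class of path from $p'$ to $p$; this class is locally constant in $q$ (by simple connectivity of $U$ and $U'$), hence continuous. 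Composing with a fixed homeomorphism $\p_p(M) \cong \R^n$ completes the $\R^n$-bundle structure. The main technical obstacle is the continuity verification; it is facilitated by the design of the basis $(\alpha,U,V)$ for the topology on $\p(M)$, but requires careful bookkeeping of the nullhomotopy conditions on the auxiliary paths $\gamma_U, \gamma_V$.
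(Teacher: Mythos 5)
Your proposal is correct and is essentially the paper's own argument: trivialize $r_0$ over a simply connected neighborhood $U$ via $[\alpha]\mapsto(\alpha(0),[\gamma\ast\alpha])$ for a choice of path $\gamma$ in $U$ from the basepoint to $\alpha(0)$, with well-definedness coming from simple connectivity of $U$ and the fiber $\p_\ast(M)$ identified with the universal cover $\widetilde{M}\approx\R^n$. You supply more detail on the continuity check and transition maps than the paper, which simply asserts the map is a fiber-preserving homeomorphism.
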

\begin{proof}
Let $U\subset M$ be a simply connected neighborhood of a point $\ast\in M$.
Note that $r_0^{-1}(U)=\bigsqcup_{x\in U}\p_{x}(M)$. Let us define
a map
\beq
\rho_U:r_0^{-1}(U)\to U\times\p_{\ast}(M)
\eeq
by $\rho_U([\alpha])=(\alpha(0),[\gamma\ast\alpha])$, where 
$\gamma:[0,1]\to U$ is any path in $U$ starting at $\ast$ and ending
at $\alpha(0)$. It is clear that that this is a fiber preserving 
homeomorphism, so it defines a local trivialization for 
$r_0:\p(M)\to M$. This completes
the proof of the lemma as the universal cover
$\widetilde{M}\approx\p_{\ast}(M)$ of $M$ is homeomorphic to $\R^n$.
\end{proof}

We shall need a parametrized version of this construction. Let 
$M \to E \to B$ be a smooth fiber bundle. 
Define $\widehat{\mathfrak{t}}E:=
\displaystyle\bigsqcup_{x \in B}\p(M_{x})$. This set
can be topologized as follows: associated to
the fiber bundle $E \to B$, one can construct a concrete
principal $\Diff(M)$-bundle $Q \to B$ 
whose (concrete) fiber over $x \in B$ is the space
$\Diff(M,M_{x})$ of all diffeomorphisms between $M$
and $M_{x}$. Then a topology on $\widehat{\mathfrak{t}}E$  is
given via the bijection from $Q\times_{\Diff(M)}\p(M)$ onto 
$\widehat{\mathfrak{t}}E$ that sends 
$(\phi,\alpha) \in \Diff(M,M_{x})\times \p(M)$ to the path $\phi\circ\alpha$. Observe that the
natural projection $\widehat{\mathfrak{t}}E \to B$ is, in
fact, the 
$\p(M)$-bundle associated to $E \to B$. 

Define the projection
\beq
s_0:\widehat{\mathfrak{t}}E\to E
\eeq
by $s_0([\alpha])=\alpha(0)$. Using the fact that $\p(M)\to M$
is $\Diff(M)$-equivariant and that the natural homeomorphism
$Q\times_{\Diff(M)}M \to E$ is covered by the homeomorphism $Q\times_{\Diff(M)}\p(M)\to\widehat{\mathfrak{t}}E$, the next lemma follows obviously from Lemma \ref{p(M)}.
\begin{lemma}
If the universal cover of $M$ is homeomorphic to $\R^n$, then
$s_0:\widehat{\mathfrak{t}}E\to E$ is a topological $\R^n$-bundle.
\end{lemma}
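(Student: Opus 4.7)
The plan is to reduce the statement to Lemma \ref{p(M)} via the associated bundle construction already set up just before the statement. Concretely, $\widehat{\mathfrak{t}}E\to B$ is presented as $Q\times_{\Diff(M)}\p(M)\to B$, where $Q\to B$ is the concrete principal $\Diff(M)$-bundle associated to $E\to B$, and the fiberwise endpoint map $r_0:\p(M)\to M$ is manifestly $\Diff(M)$-equivariant under the postcomposition action $\phi\cdot[\alpha]:=[\phi\circ\alpha]$. Consequently $s_0$ is the map on orbit spaces induced by $\mathrm{id}_Q\times r_0$, so local trivializations of $Q$ will pull back simultaneous product structures on $E$ and $\widehat{\mathfrak{t}}E$ that are intertwined by $s_0$.

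First I would fix $b\in B$ and choose a neighborhood $U\subset B$ over which $Q$ trivializes, so that one obtains compatible identifications
\beq
E|_U\;\cong\;U\times M,\qquad \widehat{\mathfrak{t}}E|_U\;\cong\;U\times\p(M),
\eeq
under which $s_0$ becomes $\mathrm{id}_U\times r_0$. By hypothesis $\widetilde{M}\approx\R^n$, so Lemma \ref{p(M)} says that $r_0$ is a topological $\R^n$-bundle over $M$: pick a simply connected open $V\subset M$ together with the local trivialization $r_0^{-1}(V)\cong V\times\R^n$ provided there. Combining these, the restriction of $s_0$ to the open set $U\times V\subset E|_U$ is homeomorphic, as a bundle map over $U\times V$, to the projection $(U\times V)\times\R^n\to U\times V$.

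Since every point of $E$ lies in some such product neighborhood $U\times V$, this exhibits $s_0:\widehat{\mathfrak{t}}E\to E$ as locally trivial with fiber $\R^n$, which is the conclusion. The only point that needs any care, and what I view as the one non-cosmetic step, is checking that the transition functions obtained from overlapping trivializations of $Q$ glue coherently on the $\p(M)$-side; but this follows immediately from the $\Diff(M)$-equivariance of $r_0$, because a diffeomorphism $\phi$ carries a local chart $V\times\widetilde{M}$ for $r_0$ to a local chart $\phi(V)\times\widetilde{M}$ by post-composition. Once this equivariance is noted, the construction is purely formal, which is presumably why the authors regard the conclusion as obvious from Lemma \ref{p(M)}.
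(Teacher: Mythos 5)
Your proposal is correct and follows exactly the route the paper intends: it presents $\widehat{\mathfrak{t}}E$ as the associated bundle $Q\times_{\Diff(M)}\p(M)$, uses the $\Diff(M)$-equivariance of $r_0:\p(M)\to M$ to identify $s_0$ locally with $\mathrm{id}_U\times r_0$, and then invokes Lemma \ref{p(M)}. The paper states this as following ``obviously'' from Lemma \ref{p(M)}; you have simply spelled out the same argument in detail.
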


Now assume that the bundle $M\to E\to B$ is
given a fiberwise Riemannian metric $g_x$,
$x\in B$.

Let $v\in\mathfrak{t}E$. Then $v\in T_{\ast}(M_x)$ for some $x\in B$ and
$\ast\in M_x$. Let $\gamma_v$ be the unique $g_x$-geodesic in $M_x$
such that $\gamma_v(0)=\ast$ and $\dot{\gamma}_v(0)=v$. Then define
\beq
\textsc{exp}:\mathfrak{t}E\to\widehat{\mathfrak{t}}E
\eeq
by
\beq
\textsc{exp}(v)=[\gamma_v].
\eeq

Since the restriction of $\textsc{exp}$ to each fiber is continuous and 
the Riemannian metrics $g_x$ vary continuously from fiber to fiber, 
$\textsc{exp}$ is a fiber preserving continuous map. 
Let
$s_1:\widehat{\mathfrak{t}}E\to E$ be defined by $s_1[\alpha]=\alpha(1)$.
For each $x\in B$ and $\ast\in M_x$, the map
\beq
s_1|_{s_0^{-1}(\ast)}:\p_{\ast}(M_x)\to M_x
\eeq
is a covering map. Thus we can pull-back 
both the smooth structure and the Riemannian metric $g_x$ from $M_x$
along $s_1|_{s_0^{-1}(y)}$. In this way
$\widehat{\mathfrak{t}}E\xrightarrow{s_0} E$ acquires
a fiberwise Riemannian metric (and thus a distance function on each
fiber). 
\begin{lemma}\label{expanding}
The map $\textsc{exp}:\mathfrak{t}E\to\widehat{\mathfrak{t}}E$
is an expanding map when restricted to each fiber.
\end{lemma}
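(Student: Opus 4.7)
The plan is to reduce the statement, fiber by fiber, to the classical Cartan--Hadamard theorem applied to the universal cover of each concrete fiber $M_x$.

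Fix a basepoint $\ast\in M_x\subset E$. The fiber of $\mathfrak{t}E\to E$ over $\ast$ is the vector space $T_\ast M_x$, whose distance function is the one coming from the inner product $g_x(\ast)$. On the other side, the fiber of $\widehat{\mathfrak{t}}E\to E$ over $\ast$ is $\p_\ast(M_x)$, and, by construction, the map $s_1|_{\p_\ast(M_x)}:\p_\ast(M_x)\to M_x$ is precisely the universal covering map, so pulling back $g_x$ identifies $\p_\ast(M_x)$ isometrically with the universal Riemannian cover $\widetilde{M_x}$ based at a lift $\widetilde{\ast}$ of $\ast$. The first step is therefore to observe that, under the canonical isometry $T_\ast M_x\cong T_{\widetilde{\ast}}\widetilde{M_x}$ induced by the covering, the restriction $\textsc{exp}|_{T_\ast M_x}$ agrees with the ordinary Riemannian exponential map $\widetilde{\exp}_{\widetilde{\ast}}:T_{\widetilde{\ast}}\widetilde{M_x}\to\widetilde{M_x}$; this is immediate because the $g_x$-geodesic $\gamma_v$ with initial velocity $v$ lifts uniquely to the $\widetilde{g}_x$-geodesic starting at $\widetilde{\ast}$ with initial velocity the lift $\widetilde{v}$, and the identification $\p_\ast(M_x)\cong\widetilde{M_x}$ (by endpoints of lifts) sends $[\gamma_v]$ to $\widetilde{\gamma}_{\widetilde{v}}(1)=\widetilde{\exp}_{\widetilde{\ast}}(\widetilde{v})$.

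The second step is to invoke Cartan--Hadamard. Since $M\to E\to B$ is a nonpositively curved bundle, each $(M_x,g_x)$ is complete and of nonpositive sectional curvature, hence so is $(\widetilde{M_x},\widetilde{g}_x)$; being simply connected as well, the exponential map $\widetilde{\exp}_{\widetilde{\ast}}$ is a diffeomorphism and is distance non-decreasing, i.e.
\beq
d_{\widetilde{g}_x}\bigl(\widetilde{\exp}_{\widetilde{\ast}}(\widetilde{v}_1),\widetilde{\exp}_{\widetilde{\ast}}(\widetilde{v}_2)\bigr)\ \geq\ \|\widetilde{v}_1-\widetilde{v}_2\|_{\widetilde{g}_x(\widetilde{\ast})}
\eeq
for all $\widetilde{v}_1,\widetilde{v}_2\in T_{\widetilde{\ast}}\widetilde{M_x}$ (the very same inequality recalled in the proof of Theorem \refer{main1}). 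Transporting this estimate back across the isometries produced in the first step yields the corresponding inequality for $\textsc{exp}$ restricted to the fiber $T_\ast M_x$, which is precisely the claim that $\textsc{exp}$ is expanding on fibers.

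The argument is essentially bookkeeping once the identifications are set up, so the only genuine subtlety is making sure that the two metrics being compared—the Euclidean metric on $T_\ast M_x$ induced by $g_x(\ast)$, and the pulled-back Riemannian metric on $\p_\ast(M_x)$—are exactly those that make the Cartan--Hadamard estimate applicable. This is guaranteed because $s_1|_{\p_\ast(M_x)}$ is a local isometry, so the differential at $\widetilde{\ast}$ identifies the Euclidean structure on $T_{\widetilde{\ast}}\widetilde{M_x}$ with the one on $T_\ast M_x$ used to measure distances in the fiber of $\mathfrak{t}E$.
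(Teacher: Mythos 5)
Your argument is correct and is essentially the paper's own proof: both reduce the claim, fiber by fiber, to the Cartan--Hadamard expansion property of the Riemannian exponential map on the universal cover $\widetilde{M}_x$, using that the endpoint map $s_1|_{\p_\ast(M_x)}\colon\p_\ast(M_x)\to M_x$ is the universal covering and that the metric on $\p_\ast(M_x)$ is by definition the pull-back of $g_x$, so all the identifications are isometries. The paper merely packages the same bookkeeping into an explicit commutative diagram involving $\p_{\tilde\ast}(\widetilde{M}_x)$.
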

\begin{proof}
Let $\widetilde{M}_x\xrightarrow{p} M_x$ be the universal cover of $M_x$
and $\tilde{\ast}\in p^{-1}(\ast)$. Consider the
following commutative diagram:
\beq
\xymatrix{
T_{\tilde{\ast}}\widetilde{M}_x\ar[d]^{p_*}\ar[r]^{\textsc{exp}}&\p_{\tilde{\ast}}(\widetilde{M}_x)\ar[d]^{\hat{p}}\ar[r]^{r_1}&\widetilde{M}_x\ar[d]^{p}\\
T_{\ast}M_x\ar[r]^{\textsc{exp}}&\p_{\ast}(M_x)\ar[r]^{r_1}&M_x
}
\eeq
where $\hat{p}$ is the lifting of $p$, i.e., 
$\hat{p}([\alpha])=[p\circ\alpha]$ and $p_*$ is the derivative of $p$ at
$\tilde{\ast}$, the map $r_1$ is defined by $r_1([\alpha])=\alpha(1)$ and 
$\textsc{exp}:T_{\tilde{\ast}}\widetilde{M}_x\to\p_{\tilde{\ast}}(\widetilde{M}_x)$ is defined with respect to the pull-back metric
$p^*g_x$ on $\widetilde{M}_x$.

Note that, when $\mathcal{P}_{\ast}(\widetilde{M}_x)$ is equipped with
the pull-back metric $(p\circ r_1)^*g_x$, the map $r_1:\p_{\tilde{\ast}}(\widetilde{M}_x)\to
\widetilde{M}_x$ becomes an isometry and $r_1\circ\textsc{exp}:
T_{\tilde{\ast}}\widetilde{M}_x\to\widetilde{M}_x$ is the usual exponential
map. This forces
$\textsc{exp}:T_{\tilde{\ast}}\widetilde{M}_x\to\p_{\tilde{\ast}}(\widetilde{M}_x)$ to be an expanding map (by Cartan-Hadamard theorem).
Since $\hat{p}$ and $p_*$ are isometries, the map 
$\textsc{exp}:T_{\ast}M_x\to\p_{\ast}(M_x)$ is an expanding map as well.
\end{proof}

\begin{proof}[\textbf{Proof of Theorem \ref{add}}]
Suppose first that $n\geq 4$.
For each $x\in B$, the following diagram is commutative
\beq
\xymatrix{
TM_x\ar[d]\ar[r]^{\textsc{exp}}&\p
(M_x)\ar[r]^{\widehat{\Phi}_x}&\p
(M'_x)\ar[d]^{r_0}\\
M_x\ar[rr]^{\Phi_x}&&M'_x
}
\eeq
where $\widehat{\Phi}_x$ is the lifting of the restriction $\Phi_x$
of the fiber homotopy equivalence $\Phi$ to the fiber over $x\in B$. 
The diagram above then induces
a map between topological $\R^n$-bundles
\beq
\xymatrix{
\mathfrak{t}E\ar[d]\ar[r]&\widehat{\mathfrak{t}}E'\ar[d]\\
E\ar[r]^{\Phi} & E'
}
\eeq

Compactness of $B$ and a covering space theory argument 
(e.g. \cite[p. 409]{FW91}) show that, for all $x\in B$,
$\Phi_x$ is a $\delta$-map for some $\delta\geq 0$. Hence, by
Lemma \ref{expanding}, the map
$\mathfrak{t}E\to\widehat{\mathfrak{t}}E'$ is a fiberwise $\delta$-map.
By Lemma \ref{coro2.7},
$\Phi^*(\widehat{\mathfrak{t}}E')$ and $\mathfrak{t}E$ are isomorphic
as topological $\R^n$-bundles.
Proceeding as in the proof of Lemma
\ref{isoasbundles}, one can easily verify that 
$\widehat{\mathfrak{t}}E'$ and $\mathfrak{t}E'$ are isomorphic as
microbundles and by Kister-Mazur theorem they must be isomorphic as
topological $\R^n$-bundles. Therefore $\Phi^*(\mathfrak{t}E')$ and $\mathfrak{t}E$
are isomorphic as topological $\R^n$-bundles.

When $n=1,2$, the result follows from the fact that the fiber
homotopy equivalence $\Phi$ is homotopic to an $M$-bundle isomorphism,
provided $\Diff(M)\hookrightarrow G(M)$ is a weak homotopy equivalence
and $B$ is a finite simplicial complex. That
$\Phi$ is homotopic to an $M$-bundle isomorphism
can be easily obtained
by an inductive argument over the 
skeleta of $B$, using the vanishing
of the relative homotopy groups of the pair $(G(M),\Diff(M))$ and the
homotopy extension property.
\end{proof}
In order to prove Corollary \ref{sameclasses}
 we use the following lemma
from \cite[Theorem 5.6]{GGRW}. We will provide a different proof
of this lemma in the Appendix. 
\begin{lemma}\label{app2}
Let $\Phi:E\to E'$ be an orientation-preserving 
fiber homotopy equivalence between
oriented smooth $M^n$-bundles
over a finite simplicial complex $B$,
where $M$ is a closed smooth $n$-manifold. Then 
$\Phi^*(e(\mathfrak{t}E'))=e(\mathfrak{t}E)$, where 
$e(\mathfrak{t}E)\in H^{n}(E;\Q)$
denotes the Euler class of $\mathfrak{t}E\to E$.
\end{lemma}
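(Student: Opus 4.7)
The plan is to identify $e(\mathfrak{t}E)$ with $\Delta_E^*[\Delta_E]$, where $\Delta_E\colon E\to E\times_B E$ is the fiberwise diagonal and $[\Delta_E]\in H^n(E\times_B E;\Q)$ is a fiberwise Poincar\'e dual of the diagonal, and then to show that $[\Delta_E]$ is characterized by a property that only involves orientation data and the fiber bundle structure. This parallels the classical identity $e(TM)=\Delta^*[\Delta_M]$ for a single closed oriented manifold $M$. First, I would construct $[\Delta_E]$ via a Thom class: picking a fiberwise Riemannian metric on $E\to B$ (available since $B$ is a finite simplicial complex), the fiberwise exponential map identifies $\mathfrak{t}E$ with a tubular neighborhood of $\Delta_E(E)$ in $E\times_B E$, so the Thom class of the oriented $\R^n$-bundle $\mathfrak{t}E$ descends to a class $[\Delta_E]\in H^n(E\times_B E;\Q)$, and $\Delta_E^*[\Delta_E]=e(\mathfrak{t}E)$ by the standard Thom-class/Euler-class identity. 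Then I would give the intrinsic characterization
\[
(\mathrm{pr}_1)_!\bigl([\Delta_E]\cup\mathrm{pr}_2^*y\bigr)=y\quad\text{for every }y\in H^*(E;\Q),
\]
where $(\mathrm{pr}_1)_!\colon H^*(E\times_B E;\Q)\to H^{*-n}(E;\Q)$ is fiber integration for the $M$-bundle $\mathrm{pr}_1\colon E\times_B E\to E$. Fiberwise this is the classical K\"unneth identity expressing the diagonal as the kernel of the identity under Poincar\'e duality on $M$, and it uses only data preserved by orientation-preserving fiber homotopy equivalences.

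Next, I would verify invariance. The map $\Phi\times_B\Phi\colon E\times_B E\to E'\times_B E'$ is an orientation-preserving fiber homotopy equivalence commuting with both projections and sending $\Delta_E$ to $\Delta_{E'}$. Since such equivalences preserve the fiberwise fundamental class, the Gysin map is natural: $\Phi^*\circ(\mathrm{pr}'_1)_!=(\mathrm{pr}_1)_!\circ(\Phi\times_B\Phi)^*$. Writing any $y\in H^*(E;\Q)$ as $\Phi^*y'$, a direct computation gives
\[
(\mathrm{pr}_1)_!\bigl((\Phi\times_B\Phi)^*[\Delta_{E'}]\cup\mathrm{pr}_2^*y\bigr)=\Phi^*(\mathrm{pr}'_1)_!\bigl([\Delta_{E'}]\cup(\mathrm{pr}'_2)^*y'\bigr)=\Phi^*y'=y,
\]
so $(\Phi\times_B\Phi)^*[\Delta_{E'}]$ satisfies the characterization of $[\Delta_E]$. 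Pulling back via $\Delta_E$ and using $\Delta_{E'}\circ\Phi=(\Phi\times_B\Phi)\circ\Delta_E$ then yields $\Phi^*e(\mathfrak{t}E')=e(\mathfrak{t}E)$.

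The hardest part will be the uniqueness statement underlying the intrinsic characterization of $[\Delta_E]$, since in general a class on the total space of a fibration is not determined by its restrictions to fibers. I would tackle this by analyzing the map
\[
H^n(E\times_B E;\Q)\longrightarrow\mathrm{End}_{H^*(B;\Q)}\bigl(H^*(E;\Q)\bigr),\quad\alpha\longmapsto\bigl(y\mapsto(\mathrm{pr}_1)_!(\alpha\cup\mathrm{pr}_2^*y)\bigr),
\]
which fiberwise over $B$ is an isomorphism by Poincar\'e duality on $M$; a Leray-Serre spectral sequence argument, using that the operation respects the natural filtrations on source and target, then promotes fiberwise injectivity to global injectivity, completing the proof.
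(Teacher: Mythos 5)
The first part of your argument is sound: the identification $e(\mathfrak{t}E)=\Delta_E^*[\Delta_E]$ via the fiberwise tubular neighborhood and Thom class, the identity $(\mathrm{pr}_1)_!\bigl([\Delta_E]\cup\mathrm{pr}_2^*y\bigr)=y$ (which follows from the projection formula for the embedding $\Delta_E$), and the computation showing that $(\Phi\times_B\Phi)^*[\Delta_{E'}]$ induces the same endomorphism of $H^*(E;\Q)$ are all correct. The gap is exactly where you predicted it, and it is fatal as the argument stands: the map $H^n(E\times_B E;\Q)\to\mathrm{End}_{H^*(B;\Q)}\bigl(H^*(E;\Q)\bigr)$ is \emph{not} injective in general, so your identity does not characterize $[\Delta_E]$. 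The endomorphism sees $\alpha$ only through cup products with classes pulled back from the two copies of $E$, and this loses information whenever restriction $H^*(E;\Q)\to H^*(M;\Q)$ is far from surjective. A concrete counterexample squarely within the scope of the lemma: let $M=\T^2$, $B=S^1$, and $E$ the mapping torus of a hyperbolic $A\in SL_2(\Z)$. The Wang sequence gives $H^1(E;\Q)=\Q$, pulled back from $S^1$ and restricting to zero on the fiber, while $H^2(E\times_BE;\Q)$ injects onto the $4$-dimensional space of $(A\times A)^*$-invariants in $H^2(\T^2\times\T^2;\Q)$. Computing the endomorphism attached to $\alpha\in H^2(E\times_BE;\Q)$ degree by degree (using the projection formula to dispose of the classes pulled back from $S^1$), one finds it detects only the coefficients of $[\T^2]\times 1$ and $1\times[\T^2]$ in $\alpha|_{\T^2\times\T^2}$; the $2$-dimensional subspace of classes restricting into $\bigl(H^1(\T^2)\otimes H^1(\T^2)\bigr)^{A\otimes A}$ — which is precisely where the interesting part of the diagonal class lives — lies in the kernel. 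Your proposed spectral-sequence repair cannot fix this: fiberwise Poincar\'e duality identifies the $E_2$-page of $E\times_BE\to B$ with $H^*(B;\cdot)$ of the local system of fiberwise endomorphisms of $H^*(M)$, but that is not the $E_2$-page of anything converging to $\mathrm{End}_{H^*(B)}(H^*(E))$; this is exactly the failure of Leray--Hirsch/K\"unneth for $E\times_BE$. Without uniqueness you only learn that $(\Phi\times_B\Phi)^*[\Delta_{E'}]-[\Delta_E]$ lies in this kernel, and nothing forces its image under $\Delta_E^*$ to vanish.

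For contrast, the paper sidesteps the fiberwise uniqueness problem by relocating the duality: it embeds $B$ in $\R^N$ and replaces it by a compact regular neighborhood, so that $E$, $E'$ and $E\times_BE$ become compact oriented manifolds with boundary. Then the diagonal class is the Poincar\'e--Lefschetz dual of $\delta_*\eta_E$ in the \emph{total space} $E\times_BE$, and its invariance is immediate from the fact that an orientation-preserving homotopy equivalence satisfies $\Phi_*\eta_E=\eta_{E'}$, together with naturality of cap products; no characterization among all classes of $H^n(E\times_BE)$ is needed. If you want to salvage a genuinely fiberwise argument, you would need a finer fiber-homotopy invariant of the diagonal than its action on cohomology (for instance the fiberwise stable collapse map onto the vertical Thom space, which is the route taken in the reference the paper cites for this lemma).
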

\begin{proof}[\textbf{Proof of Corollary \ref{sameclasses}}]
By an obvious modification of the argument
given in the proof of Corollary \ref{main2}, one can show
that $p_i(\mathfrak{t}E)=\Phi^*p_i(\mathfrak{t}E')$ for all $i$.
This, together with Lemma \ref{app2}, implies that
$\tau_c(E)=q_!\; \Phi^*\beta'^*c$ for any $c\in H^*(BSO(n);\Q)$,
where $\beta':E'\to BSO(n)$ is
the classifying map for $\mathfrak{t}E'$.
The result follows from the naturality of 
the Gysin map.
\end{proof}
\section{Tautological classes of torus and hyperbolic bundles}\label{torusbundle}

Let $G$ denote the group of affine diffeomorphisms of the $n$-dimensional
torus $\T^n$, i.e. we identify $\T^n$ with the quotient $\R^n/\Z^n$ (so
that $GL_n(\Z)$ acts naturally on $\T^n$) and define
\beq
G=\{f\in \mbox{Diff}(\T^n)|f(x)=Ax+v,\text{ where }A\in GL_n(\Z)\text{ and }v\in\T^n\}.
\eeq

The proof of Theorem \ref{torus} follows from the following three lemmas:
\begin{lemma}\label{pull-back}
Every smooth torus bundle $\T^n\to E\to B$ over a $CW$-complex $B$
is fiber homotopy equivalent
to the pull-back of the bundle $\T^n\to EG\times_G\T^n\to BG$ 
along some continuous map
$B\to BG$.
\end{lemma}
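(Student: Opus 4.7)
The plan is to exploit Dold's classification of fiber homotopy equivalences and reduce the statement to showing that the natural monoid map $G\to G(\T^n)$ (the inclusion of affine diffeomorphisms into self-homotopy equivalences) is a weak homotopy equivalence. Once this is known, passing to classifying spaces gives a weak equivalence $BG\to BG(\T^n)$, and a smooth torus bundle $\T^n\to E\to B$ classified by $f\colon B\to B\Diff(\T^n)$ will, after composing with the forgetful map $B\Diff(\T^n)\to BG(\T^n)$, lift (uniquely up to homotopy) to a map $\tilde f\colon B\to BG$. The pullback of the universal affine bundle $EG\times_G\T^n\to BG$ along $\tilde f$ will then be fiber homotopy equivalent to $E\to B$ by Dold's theorem (here we use that the relevant classifying spaces have the homotopy type of CW complexes, so weak equivalence promotes to a genuine lifting property on CW complexes $B$).

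To carry out the key step, I would first identify $G$ with the semidirect product $GL_n(\Z)\ltimes\T^n$: any affine diffeomorphism $x\mapsto Ax+v$ of $\T^n=\R^n/\Z^n$ has a linear part $A\in GL_n(\Z)$ and a translation part $v\in\T^n$. In particular, as a topological space, $G$ is homeomorphic to $GL_n(\Z)\times\T^n$ and has $GL_n(\Z)$-many components, each of which is homeomorphic to the torus $\T^n$. Next, recall (from the result of Gottlieb cited earlier in the paper as \cite{G(X)}) that for any aspherical complex $X$, the monoid $G(X)$ is homotopy equivalent to $\mathrm{Out}(\pi_1X)\times K(\mathrm{Center}(\pi_1X),1)$. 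For $X=\T^n$ this yields $G(\T^n)\simeq GL_n(\Z)\times\T^n$, since $\pi_1\T^n=\Z^n$ is abelian, with $\mathrm{Aut}(\Z^n)=GL_n(\Z)$ and center $\Z^n$.

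I then need to check that the canonical monoid map $G\to G(\T^n)$ realizes this matching. On components this is clear: an affine automorphism $Ax+v$ induces $A\in\mathrm{Aut}(\pi_1\T^n)$ on fundamental groups, so the map is a bijection on $\pi_0$. On the identity component, the translations $\T^n\subset G$ sit inside the identity component $G^0(\T^n)\simeq K(\Z^n,1)\simeq\T^n$. Evaluating at the basepoint gives a map $G^0(\T^n)\to\T^n$ whose composition with the translation inclusion $\T^n\hookrightarrow G^0(\T^n)$ is the identity; since both source and target have the homotopy type of $\T^n$, this forces $\T^n\hookrightarrow G^0(\T^n)$ to be a weak homotopy equivalence. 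Combining, $G\hookrightarrow G(\T^n)$ is a weak homotopy equivalence, hence so is $BG\to BG(\T^n)$.

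Finally, I would invoke the standard fact from Dold and Stasheff that two smooth $\T^n$-bundles over $B$ are fiber homotopy equivalent if and only if their classifying maps to $BG(\T^n)$ are homotopic, and combine this with the observation that the weak equivalence $BG\xrightarrow{\sim}BG(\T^n)$ (of spaces having the CW homotopy type) allows any map $B\to BG(\T^n)$ to be lifted, up to homotopy, to a map $\tilde f\colon B\to BG$. The pullback of $EG\times_G\T^n\to BG$ along $\tilde f$ is the desired affine bundle fiber homotopy equivalent to $E\to B$. The principal potential obstacle here is the verification that the inclusion $G\hookrightarrow G(\T^n)$ induces a weak equivalence on identity components — this is where the aspherical/Gottlieb description is doing all the heavy lifting — and the rest of the argument is formal.
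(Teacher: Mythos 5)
Your proposal is correct and follows essentially the same route as the paper: reduce to showing that the inclusion $G\hookrightarrow G(\T^n)$ of affine diffeomorphisms into self-homotopy equivalences is a weak homotopy equivalence, check this on $\pi_0$ via $\mathrm{Out}(\Z^n)=GL_n(\Z)$ and on the identity component via the splitting given by translations and evaluation at the basepoint (the paper likewise uses Gottlieb's description of $G(X)$ for aspherical $X$ to kill the higher homotopy groups), and then invoke Dold's classification of fiber homotopy equivalence classes via $[B,BG(\T^n)]$.
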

\begin{proof}
It suffices to show that the inclusion $\iota: G\hookrightarrow G(\T^n)$ 
is a weak homotopy equivalence. For in that case the induced map between
classifying spaces
 $BG\to BG(\T^n)$ is also a weak homotopy equivalence and hence we obtain a bijection 
 between the sets $[B,BG]$ and $[B,BG(\T^n)]$ of homotopy classes of 
 continuous maps from $B$ to $BG$  and to $BG(M)$ respectively.
 
 To prove that $\iota$ is a weak homotopy equivalence we note first that
 the group $G$ is isomorphic to a semidirect product $GL_n(\Z)\ltimes\T^n$.
 Thus $\pi_iG=0=\pi_iG(\T^n)$ for all $i\geq 2$. It remains to show
 that $\iota$ induces a bijection between path components 
 and an isomorphism in $\pi_1$.

Recall that $GL_n(\Z)$ acts naturally on $\T^n$ yielding a decomposition
of the identity map on $GL_n(\Z)$ as
\beq
\mbox{Out}(\pi_1(\T^n))=GL_n(\Z)\to G\xrightarrow{\iota}G(\T^n)\to
GL_n(\Z)=\mbox{Out}(\pi_1(\T^n)),
\eeq
where the first map is the action of $GL_n(\Z)$ on $\T^n$ and the last map
is the one sending a self-homotopy equivalence of $\T^n$ to its induced
(outer) automorphism in $\pi_1(\T^n)$. These two maps induce bijections at the
$\pi_0$-level. Hence $\iota:G\to G(\T^n)$ must induce a bijection at the
$\pi_0$-level as well.

Let now $0\in\T^n$ denote the identity element of 
(the abelian group) $\T^n$ and for each
$v\in\T^n$ let $L_v:\T^n\to \T^n$ be the map that sends $x\in\T^n$ to $x+v$.
The identity map $id:\T^n\to\T^n$ decomposes as
\beq
\T^n\xrightarrow{L} G\xrightarrow{\iota}G(\T^n)\xrightarrow{p}\T^n,
\eeq
where $L(v)=L_v$ and $p:G(\T^n)\to\T^n$ is the evaluation at $0\in\T^n$. Thus 
we have an isomorphism
\beq
\pi_1(\T^n,0)\xrightarrow{L_*}\pi_1(G,id)\xrightarrow{\iota_*}
\pi_1(G(\T^n),id)\xrightarrow{p_*}\pi_1(\T^n,0),
\eeq
where each group in the sequence is a free abelian group of rank $n$.
Therefore $\iota_*$
must be an isomorphism. This completes the proof of the lemma. 
\end{proof}
\begin{lemma}\label{affineflat}
Let  $\T^n\to E\to B$ be a smooth torus bundle over a compact space
$B$ and let $E'\xrightarrow{p}B$ be the pull-back bundle of 
$\T^n\to EG\times_G\T^n\to BG$ along a continuous map $B\to BG$.
Suppose that
there is a fiber homotopy equivalence $\Psi:E'\to E$. Then 
$\Psi^*\mathfrak{t}E$ and $\mathfrak{t}E'$ are isomorphic as topological
$\R^n$ bundles over $E$.
\end{lemma}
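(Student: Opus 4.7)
\medskip
\noindent\textbf{Proof plan.} The strategy is to recognize $E'\to B$ as a nonpositively curved (indeed, flat) bundle and then invoke Theorem \ref{add}. First, I would observe that the group $G=GL_n(\Z)\ltimes\T^n$ acts by isometries of the standard flat Riemannian metric on $\T^n=\R^n/\Z^n$. Via the associated-bundle construction, each (concrete) fiber $M'_x$ of $E'$ therefore inherits a canonical flat Riemannian metric, and the continuity of the $G$-action guarantees that these metrics vary continuously with $x\in B$. Hence $E'\to B$ is a nonpositively curved bundle in the sense of Section \ref{nonpositively}, in fact a flat one.

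For $n\neq 3$, Theorem \ref{add} applies directly to the fiber homotopy equivalence $\Psi:E'\to E$, with our affine bundle $E'\to B$ playing the role of the nonpositively curved bundle in the statement of that theorem. The conclusion $\Psi^{*}(\mathfrak{t}E)\cong\mathfrak{t}E'$ as topological $\R^n$-bundles over $E'$ is then immediate. (One could alternatively re-run the $\widehat{\mathfrak{t}}E$/exponential-map/Ferry--Weinberger argument of Section \ref{nonpositively} directly in this flat setting, which in fact simplifies since the exponential map on the universal cover $\R^n$ of each fiber is an isometry rather than merely weakly expanding; but the cleanest route is simply to quote Theorem \ref{add}.)

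The one remaining case $n=3$, excluded from Theorem \ref{add}, is the main obstacle, and I expect it must be handled separately. Here I would invoke Hatcher's theorem that the inclusion $G\hookrightarrow\Diff(\T^3)$ is a weak homotopy equivalence. Combined with the identification $G\simeq G(\T^n)$ established in the proof of Lemma \ref{pull-back}, this forces the inclusion $\Diff(\T^3)\hookrightarrow G(\T^3)$ to be a weak homotopy equivalence as well. An obstruction-theoretic induction over the skeleta of the finite complex $B$, entirely analogous to the $n=1,2$ argument at the end of the proof of Theorem \ref{add}, then shows that $\Psi$ is fiberwise homotopic to an isomorphism of smooth $\T^3$-bundles. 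In that case $\Psi^{*}(\mathfrak{t}E)\cong \mathfrak{t}E'$ already as smooth vector bundles, and the claimed isomorphism of topological $\R^n$-bundles follows a fortiori.
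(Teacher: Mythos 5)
Your overall plan---recognize $E'\to B$ as a nonpositively curved bundle and quote Theorem \ref{add}, handling $n=3$ separately via Hatcher---is a reasonable alternative to the paper's argument, which instead re-runs the Ferry--Weinberger machinery directly. But there is a genuine gap at the very first step: the group $G=GL_n(\Z)\ltimes\T^n$ does \emph{not} act by isometries of the standard flat metric on $\R^n/\Z^n$. An affine diffeomorphism $x\mapsto Ax+v$ has derivative $A\in GL_n(\Z)$, and $GL_n(\Z)\not\subset O(n)$ (e.g. $A=\left(\begin{smallmatrix}1&1\\0&1\end{smallmatrix}\right)$ distorts lengths). Consequently the associated-bundle construction does \emph{not} endow the fibers of $E'$ with a canonical flat Riemannian metric; what it gives canonically is a fiberwise flat affine \emph{connection}, since affine maps preserve the Levi--Civita connection of the flat metric even though they are not isometries. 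This is exactly why the paper's proof works with the connection $\nabla^F$ rather than a metric, and why it must then introduce the bi-Lipschitz constants $K_V$ to compare the locally defined pushed-forward flat metrics with a globally defined continuous family $\{h_x\}$ before Lemma \ref{coro2.7} can be applied. As written, your claim that the fibers inherit a canonical flat metric varying continuously is false, so the reduction to Theorem \ref{add} is unjustified.

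The gap is repairable within your strategy: in each trivializing chart the pushed-forward flat metrics are constant-coefficient in the affine coordinates, and the set of constant positive-definite symmetric matrices is convex, so a partition of unity on the compact base yields a continuously varying family of flat fiberwise metrics; this does exhibit $E'\to B$ as a nonpositively curved bundle, and Theorem \ref{add} then applies (modulo the minor point that Theorem \ref{add} is stated for a finite simplicial complex while the lemma allows a compact base---harmless in the intended application, where $B$ is a compact manifold). Your treatment of $n=3$ via Hatcher's theorem that $\Diff(\T^3)\hookrightarrow G(\T^3)$ is a weak homotopy equivalence is sound, and in fact covers a case the paper's own proof leaves implicit, since its appeal to Lemma \ref{coro2.7} also requires $n\geq 4$.
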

\begin{proof}
First note that the bundle $\T^n\to E'\xrightarrow{p} B$
can be given a fiberwise affine flat connection in the following way: 
let $V\subset B$ be an open subset such that there is a fiber preserving diffeomorphism 
$\phi_V:V\times\T^n\to p^{-1}(V)$. Identify $\T^n$
with $\R^n/\Z^n$ and endow it with a flat Riemannian metric
$g^F$ induced by the Euclidean metric on $\R^n$. Denote by $\nabla^F$
the Levi-Civita connection of $g^F$. Then
we put a connection $\nabla_x$ on 
$p^{-1}(x)=\T^n_x$ by ``pushing forward'' $\nabla^F$ via $\phi_V$, i.e.
\beq
\nabla_x=
\left(\left. \phi_V\right|_{\{x\}\times\T^n}\right)_*\nabla^F
\ \ \ \text{  for  } x\in V.
\eeq
Now cover $B$ with finitely many open sets $V_1,\ldots, V_m$,
as above and define affine 
connections in a similar way by pushing forward $\nabla^F$ via
diffeomorphisms 
$\phi_{V_{i}}:V_{i}\times\T^n\to p^{-1}(V_{i})$.
Since the structure group of the bundle
$E'\xrightarrow{p}B$ is the group $G$ of affine
diffeomorphisms of the torus, these connections can be glued together to
give rise to a desired fiberwise affine flat connection $\nabla_x$, 
$x\in B$.

Having a fiberwise affine connection allows us to define an 
exponential map. Let $v\in\mathfrak{t}E'$. Then $v\in T_{\ast}(\T^n_x)$
for some $x\in B$ and $\ast\in\T^n_x$. Let $\beta_v$ be the unique
geodesic (with respect to the connection $\nabla_x$), such that
$\beta_v(0)=\ast$ and $\dot{\beta}_v(0)=v$. We define a map 
$\mbox{exp}^{\nabla}:\mathfrak{t}E'\to\widehat{\mathfrak{t}}E'$
between $\R^n$-bundles by
\beq
\mbox{exp}^{\nabla}(v)=[\beta_v],
\eeq
where $\widehat{\mathfrak{t}}E'$ was defined in the previous section. This
map is continuous because the affine connections $\nabla_x$ vary 
continuously with $x$.

If we ``fiberwise lift'' the fiber homotopy equivalence $\Psi:E'\to E$
to a map
$\widehat{\Psi}:\widehat{\mathfrak{t}}E'\to\widehat{\mathfrak{t}}E$
(cf. proof of Theorem \ref{add}),
we obtain a map between $\R^n$-bundles
\beq
\mathfrak{t}E'\xrightarrow{\mbox{exp}^{\nabla}}\widehat{\mathfrak{t}}E'
\xrightarrow{\widehat{\Psi}}\widehat{\mathfrak{t}}E
\eeq
covering $\Psi:E'\to E$. 
Using an argument similar to that of Lemma \ref{isoasbundles}, it is not
hard to see that $\widehat{\mathfrak{t}}E$ is isomorphic
to $\mathfrak{t}E$ as 
$\R^n$-bundles. Thus by Lemma \ref{coro2.7}, it only remains to show
that $\widehat{\Psi}\circ\mbox{exp}^{\nabla}$ is a $\delta$-map (for a suitable
$\delta\geq 0$) when restricted to each fiber.

Let $g_x$ be the push-forward metric of the flat Riemannian metric on $\T^n$ 
along the restriction
of $\phi_V$ to $\{x\}\times\T^n$. By construction, the 
Levi-Civita 
connection of $g_x$ is $\nabla_x$. Thus, for all $x\in V$ and
$\ast\in\T^n_x$, we have
\beq
d_{g_x}(\mbox{exp}^{\nabla}(v),\mbox{exp}^{\nabla}(w))
\geq d_{\hat{g}_x}(v,w), 
\text{  for all  } v,w\in T_{\ast}\T^n_x,
\eeq
where $d_{g_x}$ and $d_{\hat{g}_x}$ are the distance functions on
$\T^n_x$ and  $T_{\ast}\T^n_x$ respectively, induced by the Riemannian metric $g_x$.

Now let $\{h_x\}_{x\in B}$ be a continuously varying family of Riemannian metrics
on the bundle $\T^n\to E'\xrightarrow{q} B$. It is not hard to see that
there exists a number $K_V\geq 1$ that depends on the open set $V$, such
that for all $x\in V$,
\begin{align}\label{ineq}
K_V d_{h_x}(\mbox{exp}^{\nabla}(v),\mbox{exp}^{\nabla}(w))\nonumber
&\geq d_{g_x}(\mbox{exp}^{\nabla}(v),\mbox{exp}^{\nabla}(w))\\
&\geq d_{\hat{g}_x}(v,w)\\
&\geq\frac{1}{K_V} d_{\hat{h}_x}(v,w)\nonumber.
\end{align}

Then we have
that for all $x\in B$
\beq
d_{h_x}(\mbox{exp}^{\nabla}(v),\mbox{exp}^{\nabla}(w))\geq\frac{1}{K^2} d_{\hat{h}_x}(v,w),
\eeq
where $K=\max\{K_{V_i}|i\in\{1,\ldots,m\}\}$.

By using this inequality, 
the compactness of $B$,
the fact that $\widehat{\Psi}$ is a $\delta'$-map when restricted to
each fiber
(see \cite[p. 409]{FW91}), and the linear structure on the fibers of
$\mathfrak{t}E'$, it is straightforward
that the composite $\widehat{\Psi}\circ\mbox{exp}^{\nabla}$
is a $\delta$-map (for a $\delta\geq 0$ as in the statement of Lemma
\ref{coro2.7}) when restricted to each fiber.
This completes the proof of Lemma \ref{affineflat}.
\end{proof}

\begin{lemma}\label{discrete}
Suppose that $B$ is a smooth manifold.
Then, for any continuous map $f:B\to BG$, the rational Pontrjagin
classes of the associated vertical tangent bundle
$\eta:\mathfrak{t}(f^*(EG\times_G\T^n))\to f^*(EG\times_G\T^n)$ vanish.
\end{lemma}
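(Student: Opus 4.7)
The plan is to identify the vertical tangent bundle in question as (pulled back from) a \emph{flat} real vector bundle, and then invoke Chern--Weil theory to conclude the vanishing of its rational Pontrjagin classes. The key feature being exploited is that $G = GL_n(\Z)\ltimes\T^n$ acts on $\T^n$ by affine maps whose linear part lies in the \emph{discrete} group $GL_n(\Z)$.

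First I would spell out the following structural observation about the universal bundle $\T^n\to EG\times_G\T^n\to BG$. Writing an element of $G$ as $g(x)=Ax+v$ with $A\in GL_n(\Z)$ and $v\in\T^n$, the derivative $dg_x=A$ is independent of $x$, so the $G$-action on the total space $T\T^n\cong\T^n\times\R^n$ of the tangent bundle of the fiber is given by $g\cdot(x,w)=(Ax+v,\,Aw)$. Hence projection to the second factor defines a $G$-equivariant map $\T^n\times\R^n\to\R^n$ where the target carries the representation $(A,v)\mapsto A$ of $G$ factoring through $GL_n(\Z)$. Passing to the Borel constructions gives a bundle map
\beq
\mathfrak{t}(EG\times_G\T^n)\;=\;EG\times_G(\T^n\times\R^n)\;\longrightarrow\; EG\times_{GL_n(\Z)}\R^n
\eeq
that is a fiberwise linear isomorphism, so $\mathfrak{t}(EG\times_G\T^n)$ is the pullback of the flat $\R^n$-bundle $EG\times_{GL_n(\Z)}\R^n\to BG$ along the projection $EG\times_G\T^n\to BG$.

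Next, writing $\mathcal{E}=f^*(EG\times_G\T^n)$ and letting $\tilde f:\mathcal{E}\to EG\times_G\T^n$ be the natural map, by naturality one has $\mathfrak{t}\mathcal{E}\cong\tilde f^*\,\mathfrak{t}(EG\times_G\T^n)$. Combining this with the previous step, $\mathfrak{t}\mathcal{E}$ is the pullback of the flat $GL_n(\Z)$-bundle $EG\times_{GL_n(\Z)}\R^n\to BG$ along the composition $\mathcal{E}\to EG\times_G\T^n\to BG$. In particular $\mathfrak{t}\mathcal{E}$ admits a reduction of its structure group to the discrete group $GL_n(\Z)\subset GL_n(\R)$, so it is a flat real vector bundle. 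Since $B$ is a smooth manifold and the structure group of $\mathcal{E}\to B$ is a group of diffeomorphisms, $\mathcal{E}$ itself is a smooth manifold, and the reduction above produces an atlas of smooth local trivializations of $\mathfrak{t}\mathcal{E}$ with locally constant transition functions.

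Finally, such an atlas determines a smooth flat connection $\nabla$ on $\mathfrak{t}\mathcal{E}$, whose curvature form vanishes identically. By the Chern--Weil construction the rational (equivalently, de Rham) Pontrjagin classes of $\mathfrak{t}\mathcal{E}$ are represented by universal polynomials in this curvature, hence they are all zero in $H^*(\mathcal{E};\Q)=H^*_{\mathrm{dR}}(\mathcal{E})$. This is exactly the conclusion of the lemma. The only delicate point, which I would handle carefully, is verifying that the flat reduction of structure group really does produce a smooth (not merely continuous) flat connection on $\mathfrak{t}\mathcal{E}$; this follows because the affine transition functions for the torus bundle are smooth in the base parameter, while their differentials in the fiber direction are locally constant, so the resulting connection is smooth in both directions.
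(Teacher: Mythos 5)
Your proof is correct and follows essentially the same route as the paper's: both reduce the structure group of $\mathfrak{t}(EG\times_G\T^n)$ to the discrete group $GL_n(\Z)$ (using that the linear part of an affine diffeomorphism of $\T^n$ is constant) and then conclude by the Chern--Weil vanishing of rational Pontrjagin classes of flat bundles over smooth manifolds. The only cosmetic difference is that you realize the flat bundle as a pullback from the base (your ``$EG\times_{GL_n(\Z)}\R^n\to BG$'' should properly be written as $EG\times_G\R^n$ with $G$ acting on $\R^n$ through its quotient $GL_n(\Z)$, since quotienting $EG$ by the subgroup $GL_n(\Z)$ alone would give a bundle over $BGL_n(\Z)$ rather than $BG$), whereas the paper constructs the flat bundle directly over the total space as $(EG\times_{\T^n}\T^n)\times_{GL_n(\Z)}\R^n$.
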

\begin{proof}
It suffices to prove that the structure group of 
$r:\mathfrak{t}(EG\times_G\T^n)\to EG\times_G\T^n$ (and hence
of $\eta:\mathfrak{t}(f^*(EG\times_G\T^n))\to f^*(EG\times_G\T^n)$) can be reduced
to the discrete group $GL_n(\Z)$. For if that is the case then 
$\eta$
is a flat vector bundle over a smooth manifold and so its 
rational Pontrjagin classes vanish. (However the rational Euler
class does not necessarily vanish for flat bundles. See 
\cite[Appendix C, p. 308, 312]{milnor}).

To prove that the structure group can be reduced to a discrete
group we first note that
there is a left action of $G$ on the tangent bundle $T\T^n$ of the torus
given by the derivative of the $G$-action on $\T^n$. Consider the vector bundle
\beq
\R^n\to EG\times_G T\T^n\xrightarrow{\hat{r}}EG\times_G\T^n,
\eeq
where $\hat{r}$ is defined by
\beq
\hat{r}[(e,V)]=[(e,\tau(V))],
\eeq
and $\tau:T\T^n\to \T^n$ is the tangent bundle projection. Now, since
$GL_n(\Z)$ sits in an exact sequence of groups
\beq
1\to \T^n\to G\to GL_n(\Z)\to 1,
\eeq
the action of $G$ on $EG\times\T^n$ induces a free action of $GL_n(\Z)$ on
$EG\times_{\T^n}\T^n$ whose orbit space is $EG\times_G\T^n$. On the other
hand, we have the usual action of $GL_n(\Z)$ on $\R^n$. Thus we can form
a vector bundle
\beq
\R^n\to (EG\times_{\T^n}\T^n)\times_{GL_n(\Z)}\R^n\to EG\times_G\T^n.
\eeq
We claim that $\mathfrak{t}(EG\times_G\T^n)$ and $(EG\times_{\T^n}\T^n)\times_{GL_n(\Z)}\R^n$ are isomorphic vector bundles
over $EG\times_G\T^n$. This claim proves the lemma because
$GL_n(\Z)$ is the structure group of the latter vector bundle.

To prove the claim we observe that $\mathfrak{t}(EG\times_G\T^n)$ 
is canonically
isomorphic to $EG\times_{G}T\T^n$. On the other hand,
the derivative of left 
translation on $\T^n$ gives rise to an isomorphism 
$\T^n\times\R^n\simeq T\T^n$ and hence to continuous map
$(EG\times_{\T^n}\T^n)\times_{GL_n(\Z)}\R^n\to EG\times_{G}T\T^n$ which
covers the identity map on $EG\times_G\T^n$ and is a linear isomorphism
on each fiber. Hence $(EG\times_{\T^n}\T^n)\times_{GL_n(\Z)}\R^n$ and
$EG\times_{G}T\T^n$ are isomorphic vector bundles.
\end{proof}
\begin{proof}[\textbf{Proof of Theorem \ref{torus}}]
Using Lemma \ref{pull-back}, we can find a continuous map
$f:B\to BG$ and a fiber homotopy equivalence
$\Psi:f^*(EG\times_G\T^n)\to E$ between torus bundles 
over the smooth manifold $B$.

By Lemma \ref{affineflat}, $\Psi^*(\mathfrak{t}E)$ and
$\mathfrak{t}(f^*(EG\times_G\T^n))$ are isomorphic topological 
$\R^n$-bundles. Thus these two bundles have the same rational Pontrjagin 
classes. But they vanish on the latter by Lemma \ref{discrete}.
\end{proof}

We conclude this section by proving
Theorem \ref{hyperbolic}. Let $M$ be a (real, complex or
quaternionic)
hyperbolic manifold. By Mostow rigidity 
theorem \cite{mostow}, there is a group isomorphism
\beq
\mbox{Out}(\pi_1M)\to\mbox{Isom}(M)
\eeq
such that
\beq
\mbox{Out}(\pi_1M)\to\mbox{Isom}(M)
\xrightarrow{\iota}G(M)\to\mbox{Out}(\pi_1M)
\eeq
is the identity map.
\begin{lemma}\label{homotopytype}
Every smooth bundle $M\to E\to B$ as in the statement of 
Theorem \ref{hyperbolic} is fiber homotopy equivalent to the pull-back
of 
$M\to E\mbox{Out}(\pi_1M)\times_{\mbox{Out}(\pi_1M)}M\to B\mbox{Out}(\pi_1M)$ along some continuous map 
$B\to B\mbox{Out}(\pi_1M)$.
\end{lemma}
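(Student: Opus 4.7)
The plan is to mimic the proof of Lemma \ref{pull-back}. It suffices to show that the composition
\beq
j:\mbox{Out}(\pi_1M)\xrightarrow{\cong}\mbox{Isom}(M)\hookrightarrow \Diff(M)\hookrightarrow G(M),
\eeq
where the first arrow is the Mostow rigidity isomorphism (used to give $\mbox{Out}(\pi_1M)$ its action on $M$ by isometries), is a weak homotopy equivalence. Once this is established, the induced map $B\mbox{Out}(\pi_1M)\to BG(M)$ is also a weak homotopy equivalence, so by \cite{dold} we get a bijection $[B,B\mbox{Out}(\pi_1M)]\to [B,BG(M)]$ between fiber homotopy equivalence classes; this directly yields the desired conclusion, since the bundle $M\to E\mbox{Out}(\pi_1M)\times_{\mbox{Out}(\pi_1M)}M\to B\mbox{Out}(\pi_1M)$ is precisely the one classified by the identity of $B\mbox{Out}(\pi_1M)$.

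To prove that $j$ is a weak homotopy equivalence, I would recall that in all three hyperbolic cases (real, complex or quaternionic), $\pi_1M$ has trivial center: this is a standard fact for torsion-free lattices in the relevant rank-one semisimple Lie groups (for instance, a nontrivial central element would give a nontrivial parallel Killing field on $M$, contradicting strict negative curvature in the real case and the analogous rank-one rigidity in the others). Hence, by the Gottlieb-type result in \cite{G(X)}, the space $G(M)$ has the homotopy type of the discrete group $\mbox{Out}(\pi_1M)\times K(\mbox{Center}(\pi_1M),1)=\mbox{Out}(\pi_1M)$. In particular every connected component of $G(M)$ is contractible and $\pi_0G(M)\cong\mbox{Out}(\pi_1M)$.

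Finally, I would exploit the factorization
\beq
\mbox{Out}(\pi_1M)\xrightarrow{\cong}\mbox{Isom}(M)\xrightarrow{\iota}G(M)\to\mbox{Out}(\pi_1M)
\eeq
displayed in the excerpt, whose total composition is the identity. At the $\pi_0$-level, the last arrow is a bijection (by the previous paragraph), hence the first factor $j_*:\mbox{Out}(\pi_1M)\to\pi_0G(M)$ is also a bijection. On higher homotopy groups both sides vanish (the source is discrete and each component of the target is contractible), so $j$ is a weak homotopy equivalence. Apart from verifying the centerlessness of $\pi_1M$, which is routine, the argument is essentially formal, so I do not expect any real obstacle beyond recognizing the correct Mostow-type section into $G(M)$.
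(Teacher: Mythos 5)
Your argument is correct and is essentially the paper's own proof: both use the Mostow factorization $\mbox{Out}(\pi_1M)\to\mbox{Isom}(M)\to G(M)\to\mbox{Out}(\pi_1M)$ to get a $\pi_0$-bijection, centerlessness of $\pi_1M$ to kill the higher homotopy of $G(M)$, and Dold's classification to conclude. The only cosmetic difference is your sketch of centerlessness; the paper simply defers this to its Remark on centerless fundamental groups.
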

\begin{proof}
The inclusion map 
$\iota:\mbox{Isom}(M)\to G(M)$ must induce a bijection between components.
Also, $\pi_iG(M)=0$ for all $i>0$ since $\pi_1M$ is centerless
(see Remark \ref{centerless} below). This
implies that the induced map
$B\mbox{Out}(\pi_1M)\to BG(M)$ is a weak homotopy equivalence and the
result follows.
\end{proof}
\begin{rmk}\label{centerless}
That $\pi_1M^n$ ($n\geq 2$) is centerless for a closed negatively
curved manifold $M$ is implicitly proven in 
\cite[Section 9]{EO}. For the reader's convenience, here is an
outline of the argument. Let $\widetilde{M}$ denote a universal
cover of $M$ and identify $\pi_1M$ with the group of deck
transformations of $\widetilde{M}\to M$. Each element 
$f\in\pi_1M-id$ leaves invariant a unique geodesic line $L_f$
in $\widetilde{M}$ (cf. \cite[7.1]{BGS}).
Suppose $\mbox{Center}(\pi_1M)\neq 1$ and pick an element 
$g\in \mbox{Center}(\pi_1M)$ different from the identity. Let
$f\in\pi_1M$. Since $fg=gf$, $f$ must leave $L_g$ invariant. 
Hence the deck transformation action restricts to a free and 
properly discontinuous action of $\pi_1M$ on $L_g$. Therefore
the orbit map $p:L_g\to L_g/\pi_1M$ is the universal covering
space for the 1-dimensional manifold $L_g/\pi_1M$. Hence both
$M^n$ and $L_g/\pi_1M$ are Eilenberg-MacLane spaces 
$K(\pi_1M,1)$. Therefore the closed manifold $M$ is homotopy equivalent to a closed 1-dimensional manifold. 
Hence $M$ itself must
be 1-dimensional, which is a 
contradiction.
\end{rmk}
\begin{proof}[\textbf{Proof of Theorem \ref{hyperbolic}}]
Let $f:B\to B\mbox{Out}(\pi_1M)$ be a continuous map such that there is a fiber
homotopy equivalence $E'\to E$ where
$E'=f^*(E\mbox{Out}(\pi_1M)\times_{\mbox{Out}(\pi_1M)}M)$. Since
$\mbox{Out}(\pi_1M)$ acts on $M$ by isometries, it is easy to equip the associated bundle
$M\to E\mbox{Out}(\pi_1M)\times_{\mbox{Out}(\pi_1M)}M\to B\mbox{Out}(\pi_1M)$ over
$B\mbox{Out}(\pi_1M)$ with a
nonpositively curved fiberwise Riemannian metric. Thus, by 
Corollary \ref{sameclasses}, $\tau_c(E')=\tau_c(E)$. But since
$\mbox{Out}(\pi_1M)$  is a finite group \cite{Borel}, the rational
cohomology of $B\mbox{Out}(\pi_1M)$ vanishes in positive degrees and so
the rational
tautological classes of the bundle $E'$ must be zero in positive degrees.
\end{proof}
\begin{rmk}
Theorem \ref{hyperbolic} also follows as particular case of 
Theorem \ref{app1} in the Appendix.
\end{rmk}
\section{Appendix}\label{appendix}
We begin with the proof of Theorem \ref{app1}.
\begin{proof}[\textbf{Proof of Theorem \ref{app1}}]
Let $\Diff_0(M)\subset\Diff(M)$ be the subgroup of $\Diff(M)$
consisting
of all self-diffeomorphisms of $M$ which are \textit{homotopic}
to the identity (note that $\Diff_0(M)$ is different 
from the identity component of $\Diff(M)$. In fact the former
contains the latter).

Consider the following pull-back diagram
\beq
\xymatrix{
\widetilde{B}\ar[d]_{\sigma}\ar[r]& B\Diff_0(M)\ar[d]\\
B\ar[r] & B\Diff(M)
}
\eeq
where $B\to B\Diff(M)$ is the classifying map for the bundle
$q:E\to B$, and $B\Diff_0(M)\to B\Diff(M)$ is the map induced by
the inclusion $\Diff_0(M)\hookrightarrow\Diff(M)$. We claim that
$\widetilde{B}\xrightarrow{\sigma}B$ is a finite sheeted 
cover of $B$. To see this, it suffices to
prove that the fiber $\Diff(M)/\Diff_0(M)$ of 
$B\Diff_0(M)\to B\Diff(M)$ is finite. But this follows
because there is a one-to-one map 
$\Diff(M)/\Diff_0(M)\to\pi_0(G(M))\simeq\mbox{Out}(\pi_1M)$, and 
$\mbox{Out}(\pi_1M)$ is finite by hypothesis.

We now consider the pull-back bundle $M\to\sigma^*E\to\widetilde{B}$ of
$M\to E\xrightarrow{q} B$ along $\sigma$. This bundle is fiber
homotopically trivial since the 
map $\widetilde{B}\to B\Diff_0(M)\to B\Diff(M)\to BG(M)\approx B\pi_0G(M)$ 
is null-homotopic; which follows from the fact that the composition
$\Diff_0(M)\hookrightarrow\Diff(M)\hookrightarrow G(M)\to\pi_0G(M)$
is a constant map, and $BG(M)\to B\pi_0G(M)$ is a weak homotopy equivalence (because $M$ is aspherical and $\pi_1M$ is centerless 
by assumption).

Hence, by Corollary \ref{main2}, 
\beq
\tau_c(\sigma^*E)=0
\eeq
for $c\in H^i(BSO(n);\Q)$ and $i>n$.

Note also that by naturality of the tautological classes
\beq
\tau_c(\sigma^*E)=\sigma^*\tau_c(E).
\eeq
Thus, it suffices to prove that 
$\sigma^*:H^*(B;\Q)\to H^*(\widetilde{B};\Q)$ is monic. But this follows
easily 
from the fact that $\sigma:\widetilde{B}\to B$ is finite sheeted covering
space, hence there is a transfer map 
$H^*(\widetilde{B};\Q)\to H^*(B;\Q)$ whose composition with
$\sigma^*$ is multiplication by the number of sheets.
\end{proof}

We now prove Theorem \ref{app3}. Recall that a closed manifold $M$
satisfies the Strong Borel Conjecture (SBC) if, for 
all $k\geq 0$, every self-homotopy equivalence of pairs
$(M\times D^k,M\times S^{k-1})\to (M\times D^k,M\times S^{k-1})$
which is a homeomorphism when restricted to the 
boundary $M\times S^{k-1}$ is
homotopic (relative to the boundary) to a homeomorphism.
\begin{proof}[\textbf{Proof of Theorem \ref{app3}}]
The first step is proving that the fiber homotopy equivalence $\Phi$ is
homotopic to a homeomorphism. Let 
$B^0\subset B^1\subset\cdots\subset B^k=B$ be the skeletal filtration of
$B$. 

First consider a $0$-simplex $v\in B$. The restriction 
$\Phi|_{p^{-1}(v)}:p^{-1}(v)\to q^{-1}(v)$ is a homotopy equivalence.
Thus, by SBC, it is homotopic to a homeomorphism, that is, there is a
homotopy $h_t^v:p^{-1}(v)\to q^{-1}(v)$ such that 
$h_0^v=\Phi|_{p^{-1}(v)}$ and $h_1^v$ is a homeomorphism. 
By taking the corresponding homeomorphism on each $0$-simplex 
of $B$, we obtain a homeomorphism $h^0:p^{-1}(B^0)\to q^{-1}(B^0)$, which
is compatible with the bundle projections and homotopic to the restriction
of $\Phi$ to $p^{-1}(B^0)$.


We now want to extend this homotopy over the $1$-skeleton. Let
$\Delta^1\subset B$ be a $1$-simplex in $B$ with vertices $v,w\in B$. Define
subspaces $E^1\subset E\times [0,1]$ and $E'^1\subset E'\times [0,1]$ by
\beq
E^1=p^{-1}(v)\times [0,1]\cup p^{-1}(\Delta^1)\times\{0\}
\cup p^{-1}(w)\times [0,1]
\eeq
and
\beq
E'^1=q^{-1}(v)\times [0,1]\cup q^{-1}(\Delta^1)\times\{0\}
\cup q^{-1}(w)\times [0,1],
\eeq
and a homotopy equivalence $\psi:E^1\to E'^1$ by
\beq
\psi(y,t)=
\begin{cases}
(h_t^v(y),t)&\text{  if    } (y,t)\in p^{-1}(v)\times [0,1]\\
(h_t^w(y),t)&\text{  if    } (y,t)\in p^{-1}(w)\times [0,1]\\
(\Phi(y),0) &\text{  if    } y\in p^{-1}(\Delta^1).
\end{cases}
\eeq

The figure below illustrates the construction.

\begin{figure}[!h]
\vspace*{0cm}
    \begin{center}
    \includegraphics[scale=0.7]{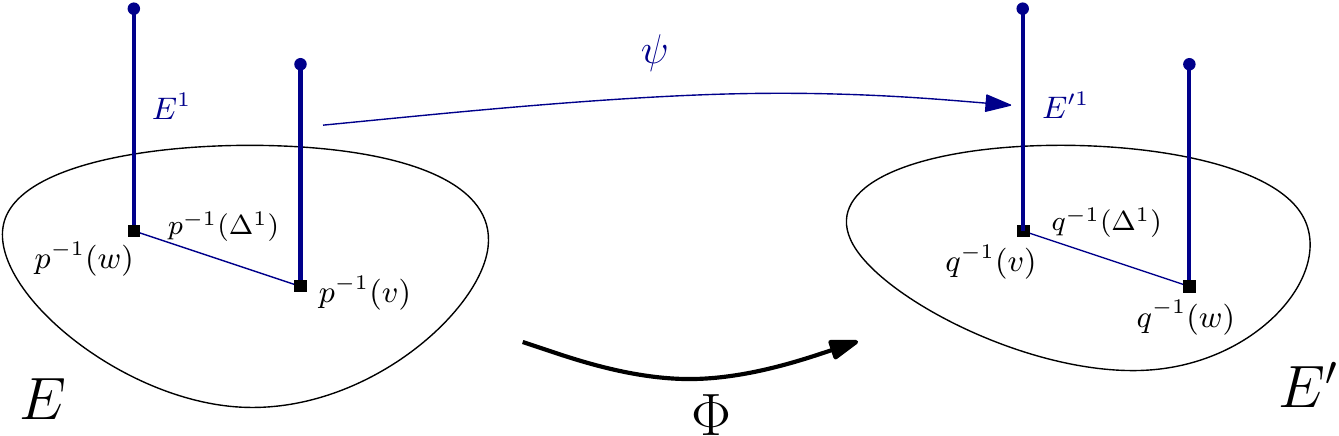}
    \caption{\small Construction of a homeomorphism homotopic
    to $\Phi$.}
    \end{center}
 \end{figure}
 
Note that both $E^1$ and $E'^1$ are homeomorphic to $M\times D^1$. Thus
map $\psi$ can be thought of as a homotopy equivalence of pairs
$(M\times D^1,M\times S^0)\to (M\times D^1,M\times S^0)$ which is
the homeomorphism 
$h^{0}$ when restricted to the
boundary 
$p^{-1}(v)\times\{1\}\cup p^{-1}(w)\times\{1\}\approx M\times S^{0}$. Thus, by SBC,  we can find a homotopy
\beq
h^{\Delta^1}_t:p^{-1}(\Delta^1)\to q^{-1}(\Delta^1)
\eeq
which extends $h_t^v$ and $h_t^w$ and such that $h^{\Delta^1}_0=\Phi|_{p^{-1}(\Delta^1)}$ and $h^{\Delta^1}_1$ 
is a homeomorphism. Repeating this process for each $1$-simplex of $B$ we obtain a
homeomorphism
$h^1:p^{-1}(B^1)\to q^{-1}(B^1)$ homotopic to the 
restriction of $\Phi$ to $p^{-1}(B^1)$. 

Continuing this way on each skeleton we obtain the
desired homeomorphism $E\to E'$ homotopic to $\Phi$.
 
The second and final step to prove the theorem is showing that
$\Phi^*\mathfrak{t}E'$ and $\mathfrak{t}E$ are stably isomorphic as
topological $\R^n$-bundles. For this, embed $B$ into the Euclidean
space $\R^N$ for some $N$ large enough. 
Take a compact regular neighborhood $\mathcal{B}$ 
of $B$ in $\R^N$ whose tangent bundle $T\mathcal{B}$ is trivial and such that there is a retraction $r:\mathcal{B}\to B$ of
$\mathcal{B}$ onto $B$.
Let $p:\mathcal{E}\to\mathcal{B}$ and
$p':\mathcal{E'}\to\mathcal{B}$
denote the pull-back bundle of $q:E\to B$  and $q':E'\to B$ along $r$
respectively. Observe that there are natural fiber homotopy equivalence
$\varphi:\mathcal{E}\to\mathcal{E'}$ and bundle maps 
$\hat{\iota}:E\to\mathcal{E}$, $\hat{\iota}':E'\to\mathcal{E}'$ covering
the inclusion $B\hookrightarrow\mathcal{B}$, such that the following 
diagram commutes
\beq
\xymatrix{
E\ar[r]^-{\hat{\iota}}\ar[d]_-{\Phi} & \mathcal{E}\ar[d]^{\varphi}\\
E'\ar[r]^-{\hat{\iota}'}			  & \mathcal{E}'
}
\eeq

Hence, by the first step,
$\varphi^*T\mathcal{E'}$ and $T\mathcal{E}$ are isomorphic as topological
$\R^n$-bundles. Therefore 
$\varphi^*\mathfrak{t}\mathcal{E'}\oplus q^*T\mathcal{B}$ and
$\mathfrak{t}\mathcal{E}\oplus q^*T\mathcal{B}$ are isomorphic as
topological $\R^n$-bundles. But since $T\mathcal{B}$ is a trivial vector bundle of rank, say $r$, then $\varphi^*\mathfrak{t}\mathcal{E'}\oplus\varepsilon_{\mathcal{E}}^r$ and
$\mathfrak{t}\mathcal{E}\oplus\varepsilon_{\mathcal{E}}^r$, where
$\varepsilon_{\mathcal{E}}^r$ is the trivial
bundle of rank $r$ over $\mathcal{E}$, are isomorphic as topological 
$\R^n$-bundles.

Finally, the topological $\R^n$-bundle isomorphism above, together with the 
isomorphisms $\hat{\iota}^*\mathfrak{\mathcal{E}}\simeq\mathfrak{t}E$ and
$\hat{\iota}^*\varphi^*\mathfrak{t}\mathcal{E'}\simeq
\Phi^*\hat{\iota}'^*\mathfrak{t}\mathcal{E}'\simeq
\Phi^*\mathfrak{t}E'$, imply that
$\mathfrak{t}E\oplus\varepsilon_E^r$ and
$\Phi^*\mathfrak{t}E'\oplus\varepsilon_E^r$ are isomorphic as topological
$\R^n$-bundles. This completes the proof of the theorem.
\end{proof}
\begin{proof}[\textbf{Proof of Corollary \ref{app4}}]
Recall that 
\beq
H^*(BSO(n);\Q)\simeq
\begin{cases}
\Q[p_1,\ldots,p_k] & \text{  if  } n=2k+1,\\
\Q[p_1,\ldots,p_k,e]/(e^2-p_k)  & \text{  if  } n=2k
\end{cases}
\eeq 
Then the corollary follows immediately from Lemma
\ref{app2} and Theorem \ref{app3}.
\end{proof}
We conclude this article proving Lemma \ref{app2} which appears in
Section \ref{nonpositively}.
\begin{customlemma}{\ref{app2}}
Let $\Phi:E\to E'$ be an orientation-preserving 
fiber homotopy equivalence between
oriented smooth $M^n$-bundles
over a finite simplicial complex $B$,
where $M$ is a closed smooth $n$-manifold. Then 
$\Phi^*(e(\mathfrak{t}E'))=e(\mathfrak{t}E)$, where 
$e(\mathfrak{t}E)\in H^{n}(E;\Q)$
denotes the Euler class of $\mathfrak{t}E\to E$.
\end{customlemma}
\begin{proof}[\textbf{Proof of Lemma \ref{app2}}]
Assume first that the base space $B$ is a 
$k$-dimensional oriented
manifold (possibly with boundary). In that case
both $E$ and $E'$ are compact $(n+k)$-manifolds (possibly with boundary).

Let $E\times_B E\to B$ be the pull-back of 
the product bundle $E\times E\to B\times B$ along the diagonal map 
$B\to B\times B$ and denote by $\delta:E\to E\times_B E$ 
the fiberwise diagonal map. Define similarly the bundle
$E'\times_BE'\to B$ and the diagonal map $\delta':E'\to E'\times_BE'$.
Observe that the Euler class of $\mathfrak{t}E$ (resp. $\mathfrak{t}E'$)
can be computed by the formula
\beq
e(\mathfrak{t}E)=\delta^*\delta_!(1) \text{   (resp.   }
e(\mathfrak{t}E')=\delta'^*\delta'_!(1)).
\eeq
Here $\delta_!$ is defined via Lefschetz duality as the map making the
following diagram commutative:
\beq
\xymatrix{
H^0(E)\ar[r]^-{\delta_!}\ar[d]_{\eta_{E}\cap} & H^n(E\times_BE)\ar[d]^{\eta_{E\times_BE}\cap}\\
H_{n+k}(E,\partial(E))\ar[r]^-{\delta_*} 	  & H_{n+k}(E\times_BE,\partial(E\times_BE))
}
\eeq
where $\eta_E\in H_{n+k}(E,\partial E)$ denotes the orientation 
class of $E$.
We also have the following commutative diagram of pairs:
\beq
\xymatrix{
(E,\partial E)\ar[r]^-{\delta}\ar[d]_-{\Phi} & (E\times_BE,\partial (E\times_BE))\ar[d]^{\Phi\times_B\Phi}\\
(E',\partial E')\ar[r]^-{\delta'}		  & (E'\times_BE',\partial (E'\times_BE'))
}
\eeq

We now make use of the two diagrams above to compute:
\begin{align*}
\Phi^*e(\mathfrak{t}E')
			&=\Phi^*\delta'^*\delta'_!(1)\\
			&=\delta^*(\Phi\times_B\Phi)^*\delta'_!(1)\\
			&=\delta^*(\Phi\times_B\Phi)^*\overline{\delta'_*\eta_{E'}}\\
			&=\delta^*(\Phi\times_B\Phi)^*\overline{\delta'_*\Phi_*\eta_{E}}\\
			&=\delta^*(\Phi\times_B\Phi)^*\overline{(\Phi\times_B\Phi)_*\delta_*\eta_E}\\
			&=\delta^*\overline{\delta_*\eta_E}\\
			&=\delta^*\delta_!(1)=e(\mathfrak{t}E).
\end{align*}
Here, a bar on a homology
class denotes its Poincar\'e - Lefschetz dual and the sixth
equation follows from the naturality of the cap product and 
the fact that $\Phi\times_B\Phi$ is a homotopy equivalence. This completes
the proof of the theorem when $B$ is manifold with boundary.

Now suppose that $B$ is a finite dimensional simplicial complex. 
Embed $B$ in $\R^N$, for some $N$ sufficiently large. Let $W$ be an oriented
regular neighborhood of $B$ in $\R^N$ such that $W$ is a 
manifold-with-boundary. Denote by $r:W\to B$ a retraction of $W$ onto $B$. 
We can pull-back the bundles $E\to B$ and $E'\to B$ 
along the retraction $r:W\to B$ to obtain $M$-bundles 
$\mathcal{E}\to W$ and $\mathcal{E'}\to W$ respectively.
These two bundles are fiber homotopy equivalent via the restriction
of $id_W\times\Phi:W\times E\to W\times E'$ to $\mathcal{E}$. 
Hence, if we denote this restriction map by $\Psi$, we have
\beq
\Psi^*(e(\mathfrak{t}\mathcal{E}'))=e(\mathfrak{t}\mathcal{E}).
\eeq 
The theorem then follows by noticing that the following diagram
commutes
\beq
\xymatrix{
E\ar[r]^{\hat{\iota}}\ar[d]_{\Phi}&\mathcal{E}\ar[d]^{\Psi}\\
E'\ar[r]^{\hat{\iota}'} & \mathcal{E}',
}
\eeq
where $\hat{\iota}:E\to\mathcal{E}$ 
(resp. $\hat{\iota}':E'\to\mathcal{E}'$) is the bundle
map covering the inclusion
$B\hookrightarrow W$, and that 
$\hat{\iota}^*\mathfrak{t}\mathcal{E}\simeq\mathfrak{t}E$ 
(resp. $\hat{\iota}'^*\mathfrak{t}\mathcal{E}'\simeq\mathfrak{t}E'$).
\end{proof}
\bibliographystyle{alpha} 
 {\footnotesize
 \bibliography{biblio}}     
 
 \Addresses
\end{document}